\newcommand{\comm}[1]{\ignorespaces}
\newtheorem{thm}{Theorem}[section]
\newtheorem*{theorem_12}{Theorem 1.2}
\newtheorem{lem}[thm]{Lemma}
\newtheorem{prop}[thm]{Proposition}
\newtheorem{cor}[thm]{Corollary}   
\newtheorem{definition}[thm]{Definition}
\newtheorem{example}[thm]{Example}
\renewcommand{\int}{\mathrm{int}}
\newcommand{\R}{\mathbb{R}}
\newcommand{\Z}{\mathbb{Z}}
\newcommand{\vol}{\operatorname{vol}}
\newcommand{\diam}{\operatorname{diam}}
\newcommand{\Ric}{\operatorname{Ric}}
\renewcommand{\rho}{\varrho}
\definecolor{purple}{rgb}{0.56, 0, 1}
\definecolor{teal}{rgb}{0, 0.55, 0.55}
\newtheorem*{CSL}{Curve Shortening Lemma}
\title[Orthogonal Geodesic Chords]{Lengths of Orthogonal Geodesic Chords on Riemannian Manifolds}
\date{\today}
\begin{document}

\author[Beach]{Isabel Beach}
\address[Beach]{Department of Mathematics,  University of Toronto, Toronto, Canada}
\email{isabel.beach@mail.utoronto.ca}

\author[Contreras-Peruyero]{Hayde\'e Contreras-Peruyero}
\address[Contreras-Peruyero]{Centro de Ciencias Matemáticas, UNAM, Morelia, Mexico}
\email{haydeeperuyero@matmor.unam.mx}

\author[Griffin]{Erin Griffin}
\address[Griffin]{Department of Mathematics, Northwestern University, Evanston, Illinois}
\email{griffine@northwestern.edu}

\author[Rotman]{Regina Rotman}
\address[Rotman]{Department of Mathematics,  University of Toronto, Toronto, Canada}
\email{rina@math.toronto.edu}

\author[Searle]{Catherine Searle}
\address[Searle]{Department of Mathematics, Statistics, and Physics, Wichita State University, Wichita, Kansas}
\email{searle@math.wichita.edu}

\begin{abstract} 

Let $N$ be a closed submanifold of a complete manifold, $M$. Then under certain topological conditions, there exists an orthogonal geodesic chord beginning and ending in $N$. In this paper we establish an upper bound for the length of such a geodesic chord in terms of geometric bounds on $M$. For example, if $N$ is a $2$-dimensional sphere embedded in a closed Riemannian $n$-manifold, then there exists an orthogonal geodesic chord in $M$ with endpoints on $N$ that has length at most 
$$
(4d+96D +8232\sqrt{A})(2n+1)
$$ 
where $d$ is the diameter of $M$, and $A$ and $D$ are the area and intrinsic diameter of $N$, respectively.
\end{abstract}

\maketitle

\section{Introduction} 
Given a manifold $M$ and a closed submanifold $N$, a solution to 
the {\em minimal submanifold free boundary problem} for $N \subset M$ is a minimal submanifold $W\subset M$ such that $\partial W\subset N$ and $ W$ meets $N$ orthogonally at $\partial W$. When the ambient manifold $M$ has boundary, $N$ is typically required to be $\partial M$. If $N=\partial M$, we additionally require $\int(W)\cap N= \emptyset$. In this paper, we focus on the case where $W$ has dimension one, in which case we say that $W$ is an {\em orthogonal geodesic chord}. We are interested in establishing an upper bound for the length of an orthogonal geodesic chord in terms of the geometric parameters of $M$ and $N$.
\par
Throughout, we assume that $M$ is a complete $n$-dimensional manifold, and $N \subsetneq M$ is a closed submanifold that is not a deformation retract of $M$. 
Let $\Omega_NM$ be the space of  piecewise-differentiable curves in $M$ with endpoints on $N$, and view $N$ as a subset of $\Omega_NM$ by identifying it with the set of point curves on $N$. 
Consider a representative $f$ of a non-trivial homotopy class $[f] \in \pi_i(\Omega_NM,N)$. 
This map can be viewed as a map $f:(D^i\times I, \partial (D^i\times I)) \to (M, N)$ by parameterizing it by $t\in I$. 
In order to extend our result to complete, non-compact manifolds, we require a bound on the size of the image of $f$. Our result therefore relies on a choice of a subset $V\subset M$ with finite extrinsic diameter $d$ and that contains both $N$ and the image of $f$. Our result also depends on a natural geometric property of $V$ that roughly quantifies how easy it is to significantly shorten a long loop in $V$ based at a point in $N$, which we call the {\em effective loop shortening property}, see Definition \ref{def:shortening}.
With this in mind, 
our main result is as follows.

\begin{thm} 
    \label{MainTheorem1}
   Let $M$ be a complete $n$-dimensional manifold, and $N \subsetneq M$ a closed submanifold such that $\pi_i(\Omega_NM,N)\neq 0$ for some $i>0$. 
   Let $V$ be a subset of $M$ that has bounded extrinsic diameter $d$ and contains both $N$ and the image of $f$. Suppose $V$ satisfies the effective loop shortening property restricted to $N$ with parameters $c, a,$ and $k$, where, by definition, $c>2kd$.
   Then there exists a geodesic chord that is orthogonal to $N$ of length at most $2c(2i+1)$. 
\end{thm}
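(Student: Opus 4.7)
\bigskip
\noindent\textbf{Proof plan.}

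The strategy is a minimax/contradiction argument: assume every orthogonal geodesic chord has length strictly greater than $L := 2c(2i+1)$ and derive a null-homotopy of $f$ in $(\Omega_NM, N)$, contradicting $[f]\neq 0$. Concretely, I will produce a continuous deformation of the family $\{f_x\}_{x\in D^i}$, rel.\ $\partial D^i$, so that every intermediate curve stays in $V$, has endpoints on $N$, and in the end every curve in the family has length $\leq L$ and can be contracted into $N$ through curves of length $\leq L$. The hypothesis that no orthogonal geodesic chord has length $\leq L$ is what allows the contraction to terminate rather than stall at a critical point.

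The deformation has two ingredients. \emph{Ingredient one} is a Birkhoff-style subdivide-and-replace procedure: parametrize each curve $f_x$ proportionally to arclength and subdivide the interval $I$ into $2i+1$ equal sub-intervals; the factor $2i+1$ is what is needed so that a piecewise replacement can be carried out continuously in the $D^i$-parameter without collisions between consecutive pivot points (the same combinatorial reason one meets in Birkhoff's original argument for sweepouts of spheres). Each sub-arc has length at most $\mathrm{length}(f_x)/(2i+1)$, and we aim to replace it with a sub-arc of length $\leq 2c$. \emph{Ingredient two} is the effective loop shortening property of $V$ restricted to $N$: whenever a sub-arc from a point $p\in N$ has length exceeding $c$, we close it up to a based loop at $p$ by joining its far endpoint to $N$ inside $V$ (cost at most $d$, or $kd$ after applying the Lipschitz parameter $k$), then apply the loop-shortening with parameter $c$. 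Because $c>2kd$, the cost of closing up is strictly dominated by the gain from shortening, so the process strictly decreases length until the sub-arc length drops below $2c$. Applied to each of the $2i+1$ sub-arcs, every curve in the family is brought below length $2c(2i+1)=L$.

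Once the family is uniformly below length $L$, I run the same length-decreasing procedure in reverse as a true homotopy: the shortening produces a monotone decreasing length functional on $\Omega_NM$ which, under the standing assumption that no orthogonal geodesic chord has length in $(0,L]$, has no critical values in that interval; by a standard Lusternik--Schnirelmann-type deformation lemma (using the compactness of $D^i$ and the completeness of $M$), the sup of the lengths can be driven to $0$, i.e., the family can be pushed entirely into $N$ through curves of length $\leq L$. This gives the null-homotopy contradicting $[f]\neq 0$, so some orthogonal geodesic chord of length $\leq L$ must exist.

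The main obstacle is the continuity of the subdivide-and-shorten step as a function of $x\in D^i$, particularly near $\partial D^i$ where $f_x$ is already a point curve in $N$ and the arclength parametrization degenerates. This is handled by choosing the subdivision parameters to depend smoothly on $\mathrm{length}(f_x)$ (so that as $\mathrm{length}(f_x)\to 0$ all pivots collapse to the base point) and by using the effective loop shortening property quantitatively, i.e., with the explicit parameters $c,a,k$ rather than qualitatively, so that the shortening bound $2c$ per sub-arc is uniform in $x$. Verifying that this uniform control survives the iteration — and in particular that the shortened family never leaves $V$ — is where the bound $c>2kd$ is used crucially and is the technical heart of the argument.
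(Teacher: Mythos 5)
Your overall architecture --- replace $f$ by a homotopic family of uniformly short curves, then run a shortening flow on that family and argue that some curve must stall at an orthogonal geodesic chord --- matches the paper's. But the mechanism you give for producing the bound $2c(2i+1)$ is not the right one, and the plan as written has two genuine gaps.

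First, the factor $2i+1$ does not come from a Birkhoff-style subdivision of each individual curve into $2i+1$ sub-arcs. In the paper it comes from an induction over the skeleta of the \emph{parameter} disk $D^i$: one triangulates $D^i$ finely enough that $f$ restricted to each cell is a $\delta$-thin strip, defines the short replacement on the $0$-skeleton (length at most $2c$ there, via Lemma \ref{lemma:base_case}), and then extends cell by cell, paying an additive $4c+2\delta$ per dimension (Lemmas \ref{lemma:extending_delta_thin} and \ref{lemma:homotopic_to_short_map}), giving $2c+i(4c+2\delta)$. This skeleton-by-skeleton construction is exactly what resolves the issue you identify as ``the main obstacle'': the homotopies furnished by the effective loop shortening property are not canonical and cannot be chosen to vary continuously over all of $D^i$ simultaneously; making the subdivision parameters depend smoothly on $\mathrm{length}(f_x)$ does not address this. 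Your proposal therefore leaves the continuity-in-$x$ problem, which is the technical heart of the theorem, unsolved.

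Second, your ``ingredient two'' closes up sub-arcs to based loops at their initial points and applies the loop shortening there, but the interior breakpoints of your $2i+1$ sub-arcs of $f_x$ do not lie on $N$; only $f_x(0)$ and $f_x(1)$ do, and the hypothesis is the effective loop shortening property \emph{restricted to} $N$, i.e., available only for loops based at points of $N$. The paper avoids this by always closing up back to the single base point $f_x(0)\in N$ and shortening iteratively from there, producing for each $t$ an arc $\beta_t$ from $f_x(0)$ to $f_x(t)$ of length at most $2c$. Relatedly, $k$ is not a Lipschitz constant, and $c>2kd$ is not what drives the length decrease: the net decrease of $\delta$ per step in Lemma \ref{lemma:base_case} comes from $a\geq d$ (the closing geodesic costs at most $a$, and a loop of length $2ka+\delta$ is traded for one of length $(2k-2)a$); the inequality $c>2kd$ is merely an automatic consequence of $c\geq 2ak+\delta$ and $a\geq d$.
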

    \noindent 
    Note that this result applies to any such pair $f$ and $V$, but of course it is ideal to choose $f$ and $V$ so that the extrinsic diameter of $V$ realizes the infimum
\begin{align*}
d_{\inf} &
=\inf_{F\in [f] }\inf_{V\subseteq M}
\{\diam_M(V)\mid F(D^i \times I)\cup N\subseteq V\},
\end{align*}
where $\diam_M(V)$ is the extrinsic diameter of $V$ in $M$.
This is the same as taking $V$ of the form $f(D^i \times I)\cup N$ and choosing $f$ so that the diameter of $f(D^i \times I)\cup N$ realizes the infimum
\begin{align*}
\inf_{F\in [f] } \{\diam_M(F(D^i \times I)\cup N)\}.
\end{align*}
Note that if $M$ is closed, then $d$ and $d_{\inf}$ are both bounded above by the diameter of $M$.
\par 
There are many settings where the parameters of the effective loop shortening can be estimated, leading to a number of applications of Theorem \ref{MainTheorem1}. We discuss most of these applications in Section \ref{sec:applications}. One especially interesting consequence of the above theorem is the following, which we prove in Section \ref{sec:applications}.
\begin{thm} 
    \label{thm:2sphere_in_ndisk} 
    Let $M$ be a closed Riemannian manifold of dimension $n$ and diameter $d$. Let $N$ be a $2$-dimensional sphere of area $A(N)$ and intrinsic diameter $D$ embedded in $M$. Then there exists a geodesic chord on $M$ orthogonal to $N$ of length at most 
    $$
    (4d+96D +8232\sqrt{A(N)})(2n+1).
    $$
\end{thm}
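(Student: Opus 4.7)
The plan is to apply Theorem \ref{MainTheorem1} with $V = M$ and with a non-trivial class $[f] \in \pi_i(\Omega_N M, N)$ for some $i \leq n$. This reduces the proof to three tasks: producing such a class $[f]$, bounding the extrinsic diameter of $V$, and verifying the effective loop shortening property for $V$ restricted to $N$.

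For the topological task, one may assume $n \geq 3$, since otherwise $N = S^2$ would be a union of components of $M$ and the problem is vacuous. The evaluation $\gamma \mapsto \gamma(0)$ is a Hurewicz fibration $\Omega_N M \to N$ whose fiber is the path space $P_{N,*}M$ and which admits a section by constant paths. Combining the long exact sequences of this split fibration and of the pair $(M,N)$ yields the identification $\pi_i(\Omega_N M, N) \cong \pi_{i+1}(M, N)$. Since $H_n(M; \mathbb{Z}/2) \neq 0$ while $H_n(S^2; \mathbb{Z}/2) = 0$, the pair $(M, N)$ has non-trivial relative homology in dimension $n$, so $N \hookrightarrow M$ is not a weak equivalence; a standard relative Hurewicz argument (applied on a suitable cover when $\pi_1$ is non-trivial) then produces a non-trivial $[f]$ with $i \leq n$. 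Because $M$ is closed we simply take $V = M$, whence the extrinsic diameter of $V$ is $d = \diam M$, and $V$ automatically contains both $N$ and the image of $f$.

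The third step is the main technical task: to verify the effective loop shortening property for $V = M$ restricted to $N$ with parameters $k = 1$, some $a > 0$, and $c = 2d + 48D + 4116\sqrt{A(N)}$. The required inequality $c > 2kd = 2d$ follows immediately from positivity of the intrinsic terms. A loop $\gamma \subset M$ of length at least $c$, based at a point of $N$, should be shortenable by combining two complementary mechanisms. The $2d$ contribution corresponds to a Birkhoff-style argument in $M$: subdivide $\gamma$ into arcs of length near $2d$ and replace consecutive subdivision points by minimizing geodesics, which reduces length by a definite amount $a$. The $48D + 4116\sqrt{A(N)}$ contribution comes from intrinsic shortening on the Riemannian $2$-sphere $N$: once the loop is deformed so as to lie in $N$, a Liokumovich--Rotman-style sweepout of $S^2$ contracts it through loops whose lengths are bounded by $48D + 4116\sqrt{A(N)}$, with these explicit constants drawn from the sharpest available length bounds for contractions on a Riemannian $2$-sphere.

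Once these ingredients are in place, Theorem \ref{MainTheorem1} yields an orthogonal geodesic chord of length at most $2c(2i+1) \leq 2c(2n+1) = (4d + 96D + 8232\sqrt{A(N)})(2n+1)$, as claimed. The principal obstacle is the third step: organising the curve shortening so that it splits cleanly into a global ($d$-based) Birkhoff contribution and an intrinsic ($D$- and $\sqrt{A(N)}$-based) sweepout contribution on $N$, while tracking the explicit universal constants $48$ and $4116$ in order to match the best known loop-contraction estimates on a Riemannian $2$-sphere.
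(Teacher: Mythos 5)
Your overall architecture (apply Theorem \ref{MainTheorem1} with $V=M$, $i\le n$, $k=1$, and $c=2d+48D+4116\sqrt{A(N)}$) and the numerology match the paper, but the step you yourself flag as ``the principal obstacle'' --- verifying the effective loop shortening property of $M$ restricted to $N$ --- is exactly where the proof lives, and the mechanism you sketch for it would fail. First, a Birkhoff-type shortening in $M$ with fixed basepoint does \emph{not} reduce length by a definite amount in general: it stalls at geodesic loops based at points of $N$, and nothing in the hypotheses excludes arbitrarily short such loops. Second, the phrase ``once the loop is deformed so as to lie in $N$'' hides an impossible step: a loop in $M$ based at a point of $N$ need not be homotopic into $N$ at all, let alone through loops of controlled length, so the intrinsic sweepout of the $2$-sphere cannot be invoked directly on $\gamma$.

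The missing idea is the dichotomy in the paper's Curve Shortening Lemma (packaged as Corollary \ref{cor:k=1}). One first establishes the loop shortening property \emph{intrinsically on $N$}: by \cite{lio_2015}, a simple loop $\gamma$ bounding a disk $\Delta$ in a Riemannian $2$-sphere contracts through based loops of length at most $2\ell(\gamma)+686\sqrt{A(\Delta)}+2D_\Delta$, and since $D_\Delta\le D+\ell(\gamma)/2$ this gives the property on $N$ with $c_0=8D+686\sqrt{A(N)}$, $a_0=D$, $k_0=1$. To upgrade to $M$, one views a loop based at $p\in N$ as a chord with both endpoints on $N$ and runs the \emph{free boundary} curve shortening flow: either it stalls, which can only happen at a stable geodesic chord orthogonal to $N$ of length at most $2\max\{D,d\}$ --- in which case the theorem already holds --- or it contracts to a point, and then the curve traced by the two endpoints is a loop \emph{in $N$}, which is shortened using $c_0$; reassembling the two homotopies yields the effective loop shortening property for $M$ restricted to $N$ with $c=\max\{6c_0,\,2d+5c_0\}\le 2d+6c_0$. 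This is where your constants $48=6\cdot 8$ and $4116=6\cdot 686$ actually come from; as written, your proposal asserts them rather than derives them, and without the stable-chord dichotomy there is no way to shorten loops in $M$ at all.
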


\subsection{Background and Motivation}

The history of the free boundary problem is detailed by S. Hildebrandt in \cite{hildebrandt1986}. In recent years, there have been a great number of results establishing the existence of a free boundary minimal surface or hypersurface under various topological and geometric conditions. For example, A. Fraser showed in \cite{fraser2000} that given a closed Riemannian manifold $M$ and a closed submanifold $N$, there exists a free boundary Riemannian $2$-disk with boundary on $N$ with bounded Morse index, provided that $M$ and $N$ satisfy some natural topological conditions. In \cite{lin2020}, L. Lin, A. Sun and X. Zhao generalize this result to all closed ambient Riemannian manifolds  using methods from T. Colding and W. Minicozzi, \cite{coldingminicozzi2008}. J. Chen, A. Fraser, and C. Pang generalize this result to free boundary surfaces of higher genus in \cite{fraser_chen_pang_2015}. In \cite{manchunli2015}, M. Li uses methods from T. Colding and C. De Lellis, \cite{colding_delellis_2003}, to prove that any compact $3$-dimensional Riemannian manifold $M$ with boundary admits a properly embedded free boundary minimal surface. The existence of a free boundary minimal annulus in a convex, compact 3-dimensional manifold with boundary was proven by D. Maximo, I. Nunes, and G. Smith in \cite{maximo2017}. 
\par
There are also a number of results that deal with the existence of free boundary minimal surfaces in a ball. The groundbreaking results of A. Fraser and R. Schoen (see \cite{frasch2011, frasersch2016}) demonstrate that the solution of a certain extremal Steklov eigenvalue problem on a compact surface with boundary can be used to generate a free boundary minimal surface with genus zero and $n$ boundary components in the unit ball. A similar result was proven by A. Folha, F. Pacard, and T. Zolotareva in \cite{folpaczol2017} for a three-dimensional ball and free boundary surfaces of genus one with a large number of boundary components. In the annulus, the work in \cite{frasch2011} was generalized by X.-Q. Fan, L.-F. Tam, and C. Yu in \cite{fantamyu2015}. For more results and open questions in this area, we refer the reader to a recent survey of M. Li \cite{li2020_openquestions}.
\par
Furthermore, one can study the existence of min-max minimal surfaces, which are surfaces whose area equals the min-max value over all sweepouts of a manifold by disks whose boundary lies in a given submanifold. For example of a result in this area, see the work of P. Laurain and R. Petrides \cite{laurain2019}. 
\par
The existence of free boundary minimal hypersurfaces is also well-studied: for example, in C. De Lellis and J. Ramic \cite{delellisramic2018}, M. Li and Zhou \cite{li2017}, and Z. Wang \cite{wang2019}. In \cite{li2017}, M. Li and Zhou proved the existence of infinitely many free boundary minimal hypersurfaces in a manifold with non-negative Ricci curvature. In \cite{wang2021}, Z. Wang solved a free boundary version of Yau's conjecture, establishing the existence of infinitely many free boundary minimal hypersurfaces in any compact Riemannian manifold with smooth boundary and of dimension at least three and at most seven. However, these hypersurfaces are only almost properly embedded. This result follows from a series of papers due to Y. Liokumovich \cite{liokumovich2018}, F. Marques and A. Neves \cite{marques2017}, K. Irie \cite{irie2018},  and A. Song  \cite{song_2023}, that solved S.-T. Yau's original conjecture for closed hypersurfaces (see \cite{yau}).
\par
Notably, orthogonal geodesic chords are one-dimensional free boundary minimal submanifolds in a manifold with boundary. In \cite{bos1963}, W. Bos proved the existence of $n$ orthogonal geodesic chords on an $n$-dimensional disk with convex boundary. In \cite{gluck1984}, H. Gluck and  W. Ziller proved the existence of an orthogonal geodesic chord on every $n$-dimensional manifold with convex boundary. In \cite{zhou2016}, X. Zhou uses the min-max method to prove the existence of an orthogonal geodesic chord in $M$ with endpoints on a closed submanifold $N$, focusing on the case where $\pi_1(M, N)=0$. In \cite{giambo2020}, R. Giamb{\`o}, F. Giannoni, and P. Piccione proved that there are at least $n$ orthogonal geodesic chords on an $n$-disk with strongly concave boundary.
J. Hass and P. Scott, in \cite{hass1994}, established the existence of two simple orthogonal geodesic chords on the convex 2-disk. Recently, D. Ko  gave another proof of this result in  \cite{ko2023}. Extending this result to higher dimensions, H. B. Rademacher proved that a generic metric on an $n$-disk with convex boundary admits $n$ simple orthogonal geodesic chords for $n\geq 3$ in \cite{rademacher2024}.
\par
Whenever the existence of free boundary minimal surfaces or hypersurfaces has been established, it is natural to study their topological and geometric properties, such as their Morse index, area, and curvature (see \cite{fraser2000, brendle2012, fraser2014, ketover2016,kapouleas2017_critcat, sargent2017, smith2017, guang2018, guang2019, carlotto2020, tran2020, shulichen2021, guang23, zhu2023,matinpour2024}). The first author proved length bounds for the length of the shortest two simple orthogonal geodesic chords in a convex 2-disk \cite{beach2024_2d_case}, building on their work of \cite{beach2024} on length bounds for the length of the shortest two simple geodesic loops in 2-sphere.
\par
Apart from their role as solutions to the minimal submanifold free boundary problem, there is interest in the study of orthogonal geodesic chords due to their link with Hamiltonian systems. Consider coordinates $(p, q) \in \R^n \times \R^n$ and a Hamiltonian
$$
H(p,q) = \frac{1}{2} p^T A(q) p + V(q),
$$
for some non-degenerate 2-form $A: \R^n \to  \R^n \times \R^n$ and a function $V: \R^n \to  \R^n$. Solutions to the original Hamiltonian system correspond to solutions of the equation
\begin{equation}
    \label{eq:hamiltonian}
0 = \frac{D^2}{ dt^2} q + \nabla V(q)
\end{equation}
on $(\R^n,g)$ with metric $g(q) = \frac{1}{2} A(q)$. Additionally, solutions of Equation \ref{eq:hamiltonian} have constant ``energy" given by $E= \frac{1}{2} ||q||^2 + V(q)$. A certain class of solutions with pendulum-like periodic motion are known as {\em brake orbits}.
The Maupertuis principle states that brake orbits with energy $E$ are geodesics with respect to the Jacobi metric $g_E=(E-V)g$ that lie in the set $V^{-1}((-\infty, E])$ (see e.g. \cite{giambo2004} by R. Giamb{\`o}, F. Giannoni, and P. Piccione). Unfortunately, $g_E$ vanishes on the boundary $V^{-1}(E)$ and is consequently not Riemannian. However, it is shown in \cite{giambo2004} that if $E$ is a regular value of $V$ and $V^{-1} ((-\infty, E])$ is compact, then for sufficiently small $\delta>0$ an orthogonal geodesic chord on the Riemannian manifold $V^{-1} ((-\infty, E- \delta])$ corresponds to a unique brake orbit on $V^{-1}((-\infty, E])$.
\par
In \cite{seifert_1949}, H. Seifert proved the existence of at least one brake orbit when $V^{-1} ((-\infty, E])$ is homeomorphic to an $n$-dimensional disk. Furthermore, he conjectured that there are at least $n$ geometrically distinct brake orbits of energy $E$ in this case. This conjecture has been investigated by R. Giamb{\`o}, F. Giannoni, and P. Piccione in a series of papers \cite{giambo2010, giambo2011, giambo2015, giambo2018, giambo2020}, and C. Liu, Y. Long, D. Zhang, and C. Zhu in \cite{liu_2013, liu_2014, long_2006, zhang_2011}. The conjecture  was ultimately proven in \cite{giambo2020}.

\subsection{Outline of Proof of Theorem \ref{MainTheorem1}}

We first establish a topological criteria for the existence of a sweepout of $M$ by curves with endpoints on $N$. We show that, assuming that $N$ is not a deformation retract of $M$, there exists $i>0$ such that $\pi_i(\Omega_NM,N)\neq 0$. We then apply the {\em min-max principle} as follows. We consider a sequence of representatives of a non-trivial homotopy class in $\pi_i(\Omega_NM,N)$. In the limit, the maximum length of a curve in these representatives tends to the smallest possible value for the maximum length of a curve in a representative of this class.
The longest such curve in each representative converges to a geodesic chord orthogonal to $N$.
If we are able to construct a particular representative that only passes through curves of length bounded above by a uniform constant, then we have an upper bound on the length of a shortest orthogonal geodesic chord. 
\par 
We want our bound to depend only on geometric parameters of $M$ and $N$. 
In particular,  our bound is defined in terms of the effective loop shortening property. This property quantifies how long a loop with a basepoint in $N$ must be to ensure that we can always ``significantly" shorten it through loops of controlled length. 
We show in the \hyperref[CSL]{Curve Shortening Lemma} that, assuming there are no sufficiently short stable geodesic chords in $M$ orthogonal to $N$, the effective loop shortening property can be reduced to an analogous geometric property of $N$. This ability to shorten loops allows us to construct the desired short sweepout by applying techniques from the work of the first, second, fourth and fifth authors in \cite{linear_bounds_group_paper}.
\par
We note, however, that there are examples demonstrating that it is not always possible to control the lengths of curves in such sweepouts. For instance, based on the example of S. Frankel and M. Katz in \cite{frankelkatz1993}, Y. Liokumovich demonstrated the existence of a sequence of metrics on $S^2$ with
bounded diameter whose ``optimal'' sweepouts contain curves of arbitrarily large length \cite{liokumovich2012}. 
This result was extended to $S^3$ by O. Alshawa and H. Y. Cheng in \cite{alshawacheng2024}, where they proved that there exists a sequence of metrics on $S^3$ such that, for some submanifold $N$ diffeomorphic to $S^2$, any sweepout of a non-trivial homotopy class in $\pi_1(\Omega_NS^3, N)$ must contain a long curve. 
\par
Consequently, it is not always possible for us to construct a sweepout that passes only through short curves. This corresponds to the case where we can only apply the effective loop shortening property when at least one of the parameters $c, a,$ or $k$ is very large. Intuitively, this means that it is not possible to efficiently shorten loops of length $2ak$. On the other hand, the obstruction to efficiently shortening loops is the existence of an orthogonal geodesic chord of length at most $2ak$: if no such chord exists, we could apply a loop shortening flow to shorten any loop of length $2ak$ through curves of length at most $2ak$ by the \hyperref[CSL]{Curve Shortening Lemma}. Therefore,  we cannot construct a sweepout that passes through short curves if $M$ already admits a short orthogonal geodesic chord, in which case our result holds anyway.
\par
Finally, note that when $N$ is a point $p$, we can interpret a geodesic chord orthogonal to $N$ as a geodesic loop based at $p$. Thus, our bounds for the length of a shortest orthogonal geodesic chord can be viewed as a generalization of the length bounds for geodesic loops obtained, for example, by S. Saborau in \cite{sabourau2004_fillrad}, by the fourth author in \cite{rotman_2008_loops}, by A. Nabutovsky and the fourth author in \cite{rotman_quant}, and by the first author in \cite{beach2024}.

\subsection{Organization}

This paper is organized as follows. In Section \ref{2}, we establish topological criteria for the existence of a non-trivial homotopy class in $\pi_i(\Omega_NM, N)$, which guarantees the existence of a non-trivial sweepout. Section \ref{sec:main_results} is dedicated to the proof of Theorem \ref{MainTheorem1}. First, we describe the effective loop shortening property and how it can be used to connect points on a loop to the base point by short curves. In the absence of short orthogonal geodesic chords, we show how the effective loop shortening property on a submanifold can be upgraded to the ambient space. We then prove how to modify a representative of a non-trivial homotopy class in $\pi_i(\Omega_NM, N)$ to obtain a homotopic map that passes only through short curves. Theorem \ref{MainTheorem1} is then proven  by an application of standard min-max techniques. Finally, in Section \ref{sec:applications}, we establish a number of interesting applications of Theorem \ref{MainTheorem1}.

\section{Existence of a Non-Trivial Relative Homotopy Class}
\label{2}

In this section we establish topological criteria for the existence of a sweepout. Recall that a non-contractible map $f:(D^i, \partial D^i)\rightarrow (\Omega_NM, N)$ is called a {\em sweepout of a non-trivial class of $\pi_i(\Omega_nM, N)$}.
\par 
 Recall that any map $$(D^i, \partial D^i)\rightarrow (M, N)$$ naturally induces a map $$(D^{i-1}, \partial D^{i-1})\rightarrow (\Omega_N M, N)$$ and vice versa. Thus, we have $\pi_i(M, N)\cong \pi_{i-1}(\Omega_N M, N)$, see, for example, Section 4.J of A. Hatcher's textbook, 
 \cite{hatcher}. If there exists a smallest number $i$ such that $\pi_i(M,N) \neq \{0\}$ and $1<i<\infty$,  then $\pi_{i-1}(\Omega_NM,N) \neq \{0\}$. 
 
\par 
In the case that $\pi_i(M,N) = \{0\}$ for all $i$, an orthogonal geodesic chord may not exist. For example, take $M$ to be the standard cylinder $S^1 \times \mathbb{R}$ and take $N=S^1 \times \{0\}$. In this case, the fundamental groups of $N$ and $M$ are isomorphic and $M$ and $N$ are both aspherical. Therefore the exact sequence of the pair  shows that $\pi_i(M,N) = \{0\}$ for all $i$. On the other hand, there are no geodesic chords orthogonal to $N$ because the only geodesics in $M$ orthogonal to $N$ are the lines $t\mapsto (\theta,t)$ for any fixed $\theta\in S^1$. 
\par 
We now provide some cases for which $\pi_i(M,N) \neq\{0\}$ for some $1\leq i\leq n$, where $n$ is the dimension of $M$.
We first consider the case where $M$ is complete and $N$ is a closed submanifold of $M$.  The Compression Lemma in Chapter 4 of \cite{hatcher} 
 states that for any CW pair $(X, A)$ and  $(Y , B)$ any pair with $B\neq \emptyset$, if we assume that $\pi_n(Y, B, y_0)=0$ for
all $y_0\in B$ for
each $n$ such that $X \setminus  A$ has cells of dimension $n$,   then every map $f : (X, A)\rightarrow (Y, B)$ is homotopic rel A to a map $X\rightarrow B$.
Since $(M, N)$ is a CW pair, if we assume that the inclusion of $N$ in $M$ is a homotopy equivalence, then the identity map from $(M, N)$ to itself satisfies the hypotheses of the Compression Lemma and  so $N$ is a deformation retract of $M$. 
Conversely, if $N$ is a deformation retract of $M$, then clearly the inclusion of $N$ in $M$ is a homotopy equivalence, and we obtain the following corollary.

\begin{cor}
    \label{cor:defretract}
    Let $M$ be a complete manifold and $N\subsetneq M$ a closed submanifold. Then the inclusion of $N$ in $M$ is a homotopy equivalence if and only if $N$ is a deformation retract of $M$.
    In particular,  $\pi_i(M, N)$  is non-trivial for some $i\geq 1$ if and only if $N$ is not a deformation retract. \qed
\end{cor}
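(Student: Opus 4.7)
The plan is to prove both directions of the equivalence and then deduce the ``in particular'' statement from the long exact sequence of the pair.

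First I would handle the easy direction: if $N$ is a deformation retract of $M$, then by definition there is a homotopy of the identity of $M$ to a retraction $r \colon M \to N$ keeping $N$ pointwise fixed, so the inclusion $\iota \colon N \hookrightarrow M$ and the retraction $r$ are homotopy inverses, and $\iota$ is a homotopy equivalence.

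For the harder direction, I would invoke the Compression Lemma stated in the paragraph just above the corollary. The setup is to take $(X,A) = (Y,B) = (M,N)$ (this is a CW pair since $M$ is a smooth manifold and $N$ is a closed smooth submanifold, hence admits a CW structure in which $N$ is a subcomplex), and apply the Compression Lemma to the identity map $\mathrm{id} \colon (M,N) \to (M,N)$. The hypothesis needed is that $\pi_n(M, N, y_0) = 0$ for every $n$ such that $M \setminus N$ has cells of dimension $n$ and every basepoint $y_0 \in N$. I would verify this by observing that if $\iota \colon N \hookrightarrow M$ is a homotopy equivalence, then the long exact sequence of the pair $(M,N)$,
\begin{equation*}
\cdots \to \pi_n(N,y_0) \xrightarrow{\iota_*} \pi_n(M,y_0) \to \pi_n(M,N,y_0) \to \pi_{n-1}(N,y_0) \xrightarrow{\iota_*} \pi_{n-1}(M,y_0) \to \cdots,
\end{equation*}
has the maps $\iota_*$ as isomorphisms for all $n$, forcing $\pi_n(M,N,y_0) = 0$ for every $n \geq 1$ and every $y_0 \in N$. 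The Compression Lemma then provides a homotopy of $\mathrm{id}_M$ rel $N$ to a map $M \to N$, which is by definition a deformation retraction of $M$ onto $N$.

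Finally, for the ``in particular'' clause, I would again use the long exact sequence above. If $\pi_i(M,N) = 0$ for all $i \geq 1$, then $\iota_* \colon \pi_i(N) \to \pi_i(M)$ is an isomorphism for every $i$, and (since $M$ and $N$ have the homotopy type of CW complexes) Whitehead's theorem upgrades this to a homotopy equivalence; the first part of the corollary then yields that $N$ is a deformation retract. Conversely, if $N$ is a deformation retract, then $\iota$ is a homotopy equivalence, and the long exact sequence forces all relative homotopy groups to vanish. The main subtlety, and the step I would be most careful about, is checking the CW-pair and basepoint hypotheses of the Compression Lemma; everything else is a formal consequence of the long exact sequence.
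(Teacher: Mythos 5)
Your argument is correct and follows the same route as the paper: the easy direction is the standard observation that a deformation retraction makes the inclusion a homotopy equivalence, and the hard direction applies the Compression Lemma to the identity of the CW pair $(M,N)$, with the vanishing of the relative homotopy groups supplied by the long exact sequence. The only cosmetic difference is that for the ``in particular'' clause you route through Whitehead's theorem, whereas one can apply the Compression Lemma directly once all $\pi_i(M,N)$ vanish; both are fine.
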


In the particular case where $M$ is closed, we have the following result.
\begin{prop} 
\label{prop:pi_non_zero}
    Let $M$ be a closed  $n$-manifold and let $N\subsetneq M$ be a closed submanifold. Then $N$ is not a deformation retract of $M$. 
\end{prop}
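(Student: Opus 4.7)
The plan is to apply Corollary~\ref{cor:defretract} and reduce the proposition to showing that the inclusion $\iota \colon N \hookrightarrow M$ is not a homotopy equivalence. I would then split the argument into two cases according to whether $\dim N$ is strictly less than $n$ or equal to $n$. These two cases are exhaustive, since an embedded submanifold of an $n$-manifold has dimension at most $n$.

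If $\dim N < n$, the obstruction is top-dimensional mod-$2$ homology. Since $N$ is a manifold of dimension less than $n$, one has $H_n(N;\mathbb{Z}/2) = 0$. On the other hand, because $M$ is a closed $n$-manifold, $H_n(M;\mathbb{Z}/2)$ contains the $\mathbb{Z}/2$ fundamental class of each connected component and is therefore nonzero. Hence $\iota_\ast$ cannot be an isomorphism on $H_n(-;\mathbb{Z}/2)$, and $\iota$ fails to be a homotopy equivalence. I would use $\mathbb{Z}/2$ rather than $\mathbb{Z}$ coefficients precisely so as not to worry about orientability of $M$ or $N$.

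If $\dim N = n$, then $N$ is an open subset of $M$, since any smoothly embedded submanifold of full dimension is open by the inverse function theorem. Combined with the hypothesis that $N$ is closed in $M$, this shows $N$ is clopen, hence a disjoint union of connected components of $M$. Since $N \subsetneq M$, at least one component of $M$ is omitted by $N$, so the induced map $\iota_\ast \colon \pi_0(N) \to \pi_0(M)$ fails to be surjective, and $\iota$ is again not a homotopy equivalence.

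I do not anticipate any substantive obstacle; the proposition is a direct consequence of standard algebraic-topological invariants once the dimension dichotomy is set up. The only mild subtlety is to ensure the two cases really cover all possibilities and to select coefficients that bypass orientability issues, both handled above.
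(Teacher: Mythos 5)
Your proof is correct and follows essentially the same route as the paper: both use the vanishing of $H_n(N;\mathbb{Z}/2)$ against the nonvanishing of $H_n(M;\mathbb{Z}/2)$, with mod-$2$ coefficients chosen to sidestep orientability. The only difference is that you explicitly dispose of the case $\dim N = n$ (via openness and $\pi_0$), whereas the paper simply asserts $\dim N < \dim M$; your extra care here is harmless and arguably an improvement.
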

\begin{proof}
    We argue by contradiction. Suppose that $N$ is a deformation retract of $M$. Then the inclusion of $N$ in $M$ is a homotopy equivalence and hence $H_i(M)\cong H_i(N)$ for all $i\leq \dim(N)$.  
    We use $\Z_2$-coefficients, as we make no assumptions on orientability. It suffices to show that $H_i(M; \Z_2)$ is not isomorphic to $H_i(N; \Z_2)$ for some $i>0$. Since $N$ is a closed submanifold of $M$, $\dim(N)<\dim(M)$ and hence $H_n(N;\Z_2)=0$, whereas $H_n(M; \Z_2)\cong \Z_2$, a contradiction, and the result follows.
\end{proof}

Finally, we consider the case where we have a compact manifold, $M$, with a unique boundary component, $N$ in the following proposition. 

\begin{prop}
\label{boundary}
    Let $M$ be a compact manifold with connected boundary. Then $M$ does not deformation retract onto $\partial M$. \qed
\end{prop}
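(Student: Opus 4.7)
The plan is to argue by contradiction, mirroring the strategy of Proposition \ref{prop:pi_non_zero} but adapted to the setting of a manifold with boundary. Working throughout with $\Z_2$-coefficients (to avoid any orientability hypothesis on $M$ or $\partial M$), I would set $n=\dim M$ and suppose for contradiction that $M$ deformation retracts onto $\partial M$. Then the inclusion $i\colon \partial M \hookrightarrow M$ is a homotopy equivalence, and in particular the induced map $i_*\colon H_{n-1}(\partial M;\Z_2) \to H_{n-1}(M;\Z_2)$ is an isomorphism.

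Since $\partial M$ is a closed connected $(n-1)$-manifold, it carries a non-zero $\Z_2$-fundamental class $[\partial M]\in H_{n-1}(\partial M;\Z_2) \cong \Z_2$, so $i_*[\partial M]$ must also be non-zero. The contradiction will come from the long exact sequence of the pair $(M,\partial M)$:
\begin{equation*}
H_n(M,\partial M;\Z_2) \xrightarrow{\partial_*} H_{n-1}(\partial M;\Z_2) \xrightarrow{i_*} H_{n-1}(M;\Z_2).
\end{equation*}
Because $M$ is a compact manifold with non-empty boundary, Lefschetz duality provides a relative $\Z_2$-fundamental class $[M,\partial M]\in H_n(M,\partial M;\Z_2)$, and a standard computation (via a collar neighborhood of $\partial M$) shows that the connecting homomorphism satisfies $\partial_*[M,\partial M] = [\partial M]$. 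Hence $[\partial M] \in \operatorname{Im}(\partial_*) = \ker(i_*)$, forcing $i_*[\partial M] = 0$ and yielding the desired contradiction.

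The key input I am relying on is the classical fact that the boundary operator in the long exact sequence sends the relative fundamental class of $(M,\partial M)$ to the absolute fundamental class of $\partial M$; this is precisely the ingredient that underlies the standard non-retraction theorem for compact manifolds, so the main obstacle is really just pointing to the right theorem rather than any genuine difficulty. The connectedness assumption on $\partial M$ is used only to identify $H_{n-1}(\partial M;\Z_2)$ with $\Z_2$ and single out one fundamental class; the same reasoning would apply component by component if this hypothesis were dropped.
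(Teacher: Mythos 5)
Your proof is correct, but it takes a genuinely different route from the paper. The paper argues via the double $DM = M \cup_{\partial M} M$: assuming a deformation retraction of $M$ onto $\partial M$, it pastes two copies of the retracting homotopy to obtain a deformation retraction of the closed manifold $DM$ onto the proper closed submanifold $\partial M$, contradicting Proposition \ref{prop:pi_non_zero} (which is itself a top-degree $\Z_2$-homology comparison). You instead give the classical non-retraction argument directly: Lefschetz duality supplies the relative fundamental class $[M,\partial M] \in H_n(M,\partial M;\Z_2)$, the connecting map sends it to $[\partial M]$, and exactness forces $i_*[\partial M]=0$, contradicting that the inclusion is a homotopy equivalence. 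The paper's approach buys elementarity relative to its own toolkit --- it reuses an already-proved proposition and needs only a pasting lemma, avoiding Lefschetz duality and the computation $\partial_*[M,\partial M]=[\partial M]$ --- though it does quietly rely on the doubled homotopy being well defined on the overlap. Your approach is self-contained modulo standard facts about fundamental classes of manifolds with boundary, avoids constructing the double altogether, and, as you note, extends immediately to disconnected boundary by summing over components; it also isolates exactly which homological fact is doing the work. Both are valid; yours is the more standard textbook proof of the non-retraction theorem, while the paper's is a quicker corollary of its earlier result.
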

\begin{proof}
    Consider the double of $M$, $DM=M\cup_{\partial M} M$. Suppose $M$ retracts onto $\partial M$ and let $r:M\times I\rightarrow M$ be the retraction. Then by the Pasting Theorem 18.3 in Munkres \cite{Mu}, it follows that $r\smile r$ gives us a retraction of $DM$ onto $\partial M$. However, this is impossible by Proposition \ref{prop:pi_non_zero}, and the result follows.
 \end{proof}

\section{Proofs of Main Results}
\label{sec:main_results}

Let $\Omega_N^LM$ denote the subset of $\Omega_NM$ of curves with length at most $L$,  where we identify $N$ with the space of point curves on $N$.
Let $f$ be a {\em sweepout of a non-trivial class of $\pi_i(\Omega_nM, N)$}, that is,   a non-contractible map $f:(D, \partial D)\rightarrow (\Omega_NM, N)$.  
If $M$ satisfies the effective loop shortening property with parameters $c$, $a$, and $k$ we show that there exists a 
map $g:(D^{i-1},\partial D^{i-1}) \to (\Omega_N^LM,N)$ that is homotopic to $f$ in the space $(\Omega_NM,N)$, where $L$ is a function of $c$ and $i$.
The map $g$ is constructed by splitting $D^{i-1}$ into very small rectangles and defining $g$ by induction on the skeleta of $D^{i-1}$. At each step we replace the curves in the image of $f$ on the cells with ``short'' curves. We  then show that $f$ and $g$ are homotopic. 
\par 
The value of $L$ depends on the property of being able to significantly shorten loops of sufficiently short length without increasing  their length by a large amount.
We call this property the {\em effective loop shortening property}. 
\begin{definition}[{\bf Effective Loop Shortening Property}]
    \label{def:shortening} 
    Let $V \subset M$ be a compact subset of a complete $n$-dimensional manifold $M$. Let $V$ have extrinsic diameter $d$.
    We say that $V$ satisfies the {\em effective loop shortening property with parameters $c, a$ and $k$} if there exist real numbers $c$, $a \geq d$ and $k\geq 1$ such that for sufficiently small $\delta>0$, every loop based at a point in $V$ of length at most $2ak+\delta$ can be shortened in $M$ to a loop of length at most $(2k-2)a$ over loops with a fixed basepoint of length at most $c$. Note that necessarily $c \geq 2ak+\delta$. 
    \par
    Let $K$ be a subset of $V$. If the above property is satisfied by all loops in $V$ with base points in $K$, we say that $V$ satisfies the {\em effective loop shortening property restricted to $K$ with parameters $c, a$ and $k$}.
\end{definition}
\noindent
We note that the effective loop shortening property is satisfied in many situations. We give one example here.
\begin{example} 
    Let $M$ be a closed Riemannian manifold with diameter $d$ such that there are no geodesic loops on $M$ of length less than or equal to $2d$. Then we claim that $M$ satisfies the effective loop shortening property with parameters $c=2d +\delta$, $a=d$, and $k=1$,  for an arbitrarily small $\delta$. To prove the claim, note that because $M$ is closed, it has a shortest geodesic loop and this loop has length strictly greater than $2d$. Therefore there is a small $\delta >0$ such that there exist no geodesic loops on $M$ of length in the interval $[0,2d+\delta]$. Then any closed curve of length at most $2d+\delta$ can be homotoped to a point via loops of length at most $2d+\delta$ by any curve shortening flow with a fixed basepoint, proving the claim. 
\end{example}
\par 
We now turn our attention to proving the main result.
We begin by proving the following lemma, which allows us to homotope between two arcs with equal endpoints if we can contract the closed loop they form.
\begin{lem}
    \label{lemma:loops_to_arcs} 
    Let $M$ be a Riemannian manifold. Let $e_1$ and $e_2$ be two paths connecting a pair of points $p, q \in M$ of respective lengths $l_1$ and $ l_2$. Suppose the loop $\alpha= e_1 * \overline{e}_2$  based at $p$ can be homotoped to a loop $\beta$ based at $p$ over loops $\alpha_\tau$ based at $p$ of length at most $l_3\geq l_1+l_2$, as in Figure \ref{fig:loops_to_arcs} (a). Then there exists a path homotopy between $\overline{\beta}*e_1$ and $e_2$ over curves of length at most $l_1 +l_3$. 
\end{lem}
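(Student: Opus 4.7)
The plan is to construct a path homotopy from $e_2$ to $\bar\beta * e_1$ in two concatenated stages; reversing it yields the homotopy claimed. The guiding idea is that the mismatch between $e_2$ and $\bar\alpha_\tau * e_1$ at $\tau=0$ can be absorbed by first growing a retracing ``spike'' along $e_1$, after which the given loop homotopy from $\alpha$ to $\beta$ can be applied directly to the initial segment.

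\medskip
\noindent \textbf{Stage 1 (spike growth).} I first path-homotope $e_2$ to $e_2 * \bar e_1 * e_1$. For $s\in[0,1]$ attach to the endpoint $q$ of $e_2$ the loop at $q$ that traces $e_1$ backwards from $q$ to $e_1(1-s)$ and returns to $q$ along $e_1$. At $s=0$ this loop is the constant loop at $q$, and at $s=1$ it is $\bar e_1 * e_1$. Every intermediate curve is a path from $p$ to $q$ of length exactly $l_2 + 2 s l_1 \leq l_2 + 2 l_1$.

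\medskip
\noindent \textbf{Stage 2 (transport via $\alpha_\tau$).} For $\tau\in[0,1]$ consider the path $\bar\alpha_\tau * e_1$. Since $\alpha_\tau$ is a loop at $p$, each of these is a path from $p$ to $q$ of length at most $l_3 + l_1$. Using $\alpha_0 = e_1 * \bar e_2$, we have $\bar\alpha_0 = e_2 * \bar e_1$, so $\bar\alpha_0 * e_1 = e_2 * \bar e_1 * e_1$, which is exactly the endpoint of Stage 1. At $\tau=1$ we obtain $\bar\beta * e_1$.

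\medskip
\noindent \textbf{Concatenation and length bound.} Splicing these two families (say over $s\in[0,1/2]$ and $\tau\in[1/2,1]$, with the obvious reparametrizations) produces a continuous path homotopy from $e_2$ to $\bar\beta * e_1$, with all intermediate curves having endpoints $p$ and $q$. The maximum length that appears is $\max\{l_2 + 2l_1,\; l_3+l_1\}$. The hypothesis $l_3 \geq l_1 + l_2$ gives $l_2 + 2l_1 \leq l_1 + l_3$, so every curve in the homotopy has length at most $l_1 + l_3$.

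\medskip
\noindent There is no serious obstacle here; the proof is essentially bookkeeping. The only points requiring care are the identification $\overline{e_1 * \bar e_2} = e_2 * \bar e_1$ needed to match the two stages, and verifying that the spike in Stage 1 never pushes the total length past the bound $l_1 + l_3$, which is exactly where the assumption $l_3 \geq l_1 + l_2$ is used.
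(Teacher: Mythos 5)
Your proposal is correct and is essentially the paper's own argument run in the opposite direction: the paper homotopes $\overline{\beta}*e_1$ to $e_2*\overline{e}_1*e_1$ via $\overline{\alpha}_\tau*e_1$ and then contracts the spike $\overline{e}_1*e_1$, while you grow the spike first and then apply the $\overline{\alpha}_\tau*e_1$ family; the length bookkeeping agrees with the paper's bound $l_1+l_3$.
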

\begin{figure}[ht]
    \includegraphics[width=\textwidth]{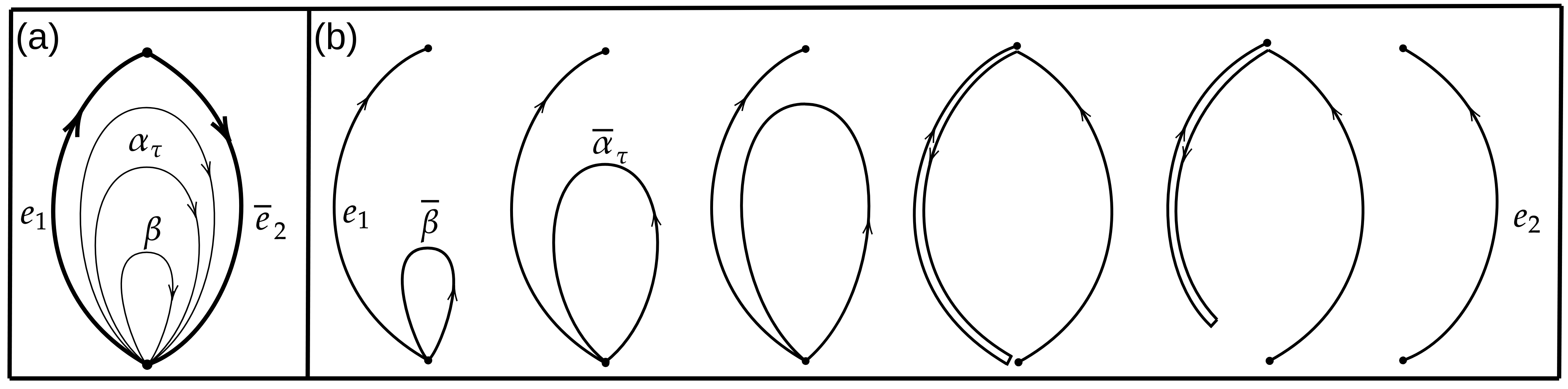}
    \caption{Illustration of the homotopies used in the proof of Lemma \ref{lemma:loops_to_arcs}.
\textbf{(a)} The path homotopy between $\alpha = e_1 \ast \bar{e}_2$ and $\beta$ through the family of loops $\alpha_\tau$.
 \textbf{(b)}\, The path homotopy between $\overline{\beta}*e_1$ and $e_2$. } 
    \label{fig:loops_to_arcs}
\end{figure}
\begin{proof}
  The homotopy between $\overline{\beta}*e_1$ and $e_2$ is as follows.
  The curves $\overline{\alpha}_\tau * e_1$ give a homotopy from $\overline{\beta}*e_1$ to $ e_2 * \overline{e}_1 * e_1$, see Figure \ref{fig:loops_to_arcs} (b). The curve $ e_2 * \overline{e}_1 * e_1$ is homotopic to $e_2$ because we can contract $ \overline{e}_1 * e_1$ along itself. This completes the desired homotopy. Moreover, the length of the longest curve in this homotopy is at most $l_1 + l_3$.
\end{proof}
\noindent
In a subset  $V\subset M$ that satisfies the effective loop shortening property with parameters $c, a$ and $k$, the above result allows us to replace a curve of length $2ka+\delta$ by a concatenation of a minimizing geodesic and a loop of length at most $ (2k-2)a$. This new curve is therefore shorter than the original by at least $\delta$. This fact is used in the following lemma, Lemma \ref{lemma:base_case}, to substantially shorten long curves, as in Lemma 5.2 of \cite{linear_bounds_group_paper}.
\par 
We remark that Lemma \ref{lemma:base_case} is a key ingredient which we use in the \hyperref[CSL]{Curve Shortening Lemma} to ``upgrade" the effective loop shortening property on a submanifold to the effective loop shortening property in the ambient space. Moreover, Lemma \ref{lemma:base_case} is used to prove Lemmas \ref{lemma:example_case} and  \ref{lemma:extending_delta_thin}. It also helps to establish the base case of the induction step of our main theorem in Lemma \ref{lemma:homotopic_to_short_map}.

 \begin{lem}
\label{lemma:base_case} 
     Let $M$ be a complete Riemannian manifold and $N$ a closed submanifold of $M$. Let $V$ be a subset of $M$ and $\eta$ be a path in $V$ beginning at $p\in M$. Suppose that one of the following holds.
     \begin{enumerate}
     \item
  $V$ satisfies the effective loop shortening property with parameters $c$, $a$, and $k$; or
    \item The path $\eta$ begins in $N$ and 
    $V$ satisfies the effective loop shortening property with parameters $c,a,$ and $k$ restricted to $N \subset V$. 
    \end{enumerate}
     Then there exists a one-parameter family of curves $\{\beta_t\}_{t\in [0, 1]}$ in $M$ such that $\beta_t(0)=p$ and $\beta_t(1)=\eta(t)$. Moreover, the length of each $\beta_t$ is at most $2c$ and if $\eta$ is a loop, then the length of $\beta_1$ is at most $(2k-2)a$.
 \end{lem}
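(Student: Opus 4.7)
The plan is to construct the family $\{\beta_t\}$ by discretizing $\eta$ into short arcs, building ``anchor curves'' $\gamma_j$ from $p$ to $\eta(t_j)$ of controlled length via the effective loop shortening property, and then using Lemma \ref{lemma:loops_to_arcs} to interpolate continuously between consecutive anchors.

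First I would parameterize $\eta$ by arc length, so that $\eta \colon [0,L] \to V$ with $\eta(0)=p$, and fix a partition $0=t_0<t_1<\cdots<t_m=L$ with step size $s := t_{j+1}-t_j \le 2a-2d+\delta$. Since $p$ and each $\eta(t_j)$ lie in $V$, which has extrinsic diameter $d$, for each $j \ge 1$ I pick a minimizing path $\sigma_j$ in $M$ from $p$ to $\eta(t_j)$ of length at most $d$; if $\eta$ is a loop, I take $\sigma_m$ to be the constant path at $p$.

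Next I define the anchor curves $\gamma_j$ from $p$ to $\eta(t_j)$ inductively, maintaining the bound $|\gamma_j| \le (2k-2)a + d$. Set $\gamma_0 \equiv p$. Given $\gamma_j$, form the loop $\alpha_j := \gamma_j * \eta|_{[t_j,t_{j+1}]} * \overline{\sigma}_{j+1}$ based at $p$, whose length is at most $(2k-2)a + 2d + s \le 2ak + \delta$. The basepoint $p$ lies in $V$ in case (1) and in $N$ in case (2), so the (restricted) effective loop shortening property applies and homotopes $\alpha_j$ to a loop $\lambda_j$ at $p$ of length at most $(2k-2)a$ through loops of length at most $c$. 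Set $\gamma_{j+1} := \lambda_j * \sigma_{j+1}$, of length at most $(2k-2)a + d$. When $\eta$ is a loop, $\sigma_m$ is the constant path at $p$, so $\gamma_m = \lambda_{m-1}$ has length at most $(2k-2)a$, yielding the second conclusion. To interpolate between consecutive anchors, I would apply Lemma \ref{lemma:loops_to_arcs} with the roles of $e_1$ and $e_2$ \emph{swapped}: take $\tilde e_1 = \sigma_{j+1}$, $\tilde e_2 = \gamma_j * \eta|_{[t_j,t_{j+1}]}$, and use the reversed shortening $\overline{\alpha}_j \simeq \overline{\lambda}_j$ through loops of length at most $c$. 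This produces a path homotopy from $\gamma_{j+1}$ to $\gamma_j * \eta|_{[t_j,t_{j+1}]}$, fixing endpoints $p$ and $\eta(t_{j+1})$, through curves of length at most $|\sigma_{j+1}| + c \le d + c \le 2c$, using $c \ge 2ak \ge 2d$.

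I would then assemble $\{\beta_t\}$ by reparameterizing $\eta$ to dwell briefly at each $\eta(t_j)$ (which is harmless since the conclusion concerns only the image of $\eta$): on each moving sub-interval let $\beta_t = \gamma_j * \eta|_{[t_j,\,\cdot\,]}$, of length at most $(2k-2)a + d + s \le 2c$, and on each dwell at $\eta(t_{j+1})$ let $\beta_t$ trace the path homotopy just built. The resulting family is continuous, every $\beta_t$ has length at most $2c$, and $\beta_0 \equiv p$ while $\beta_1 = \gamma_m$. The hard step is obtaining the bound $2c$ in the interpolation: a naive application of Lemma \ref{lemma:loops_to_arcs} with $e_1 = \gamma_j * \eta|_{[t_j,t_{j+1}]}$ gives curves of length $|e_1| + c$, which already exceeds $2c$ once $k \ge 2$. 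The swap replaces the large $|e_1|$ by the small $|\sigma_{j+1}| \le d$, and this is precisely what makes the bound close.
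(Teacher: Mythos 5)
Your proposal is correct and follows essentially the same strategy as the paper's proof: close up a controlled-length prefix of $\eta$ with a minimizing geodesic back to $p$, apply the effective loop shortening property to the resulting loop of length at most $2ak+\delta$, and interpolate via Lemma \ref{lemma:loops_to_arcs} with the \emph{short} geodesic playing the role of $e_1$ so the interpolating curves have length at most $d+c\le 2c$ (the paper likewise takes $e_1=f_2$, getting $a+c\le 2c$). The only differences are bookkeeping — you use a fixed partition with step $\le 2a-2d+\delta$ and anchor curves, while the paper iteratively replaces the prefix and re-measures arclength, shortening $\eta$ by $\delta$ per step — and your explicit handling of the final loop case via the constant $\sigma_m$ is a clean way to get the $(2k-2)a$ bound on $\beta_1$.
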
 

\begin{figure}[ht]
    \includegraphics[width=\textwidth]{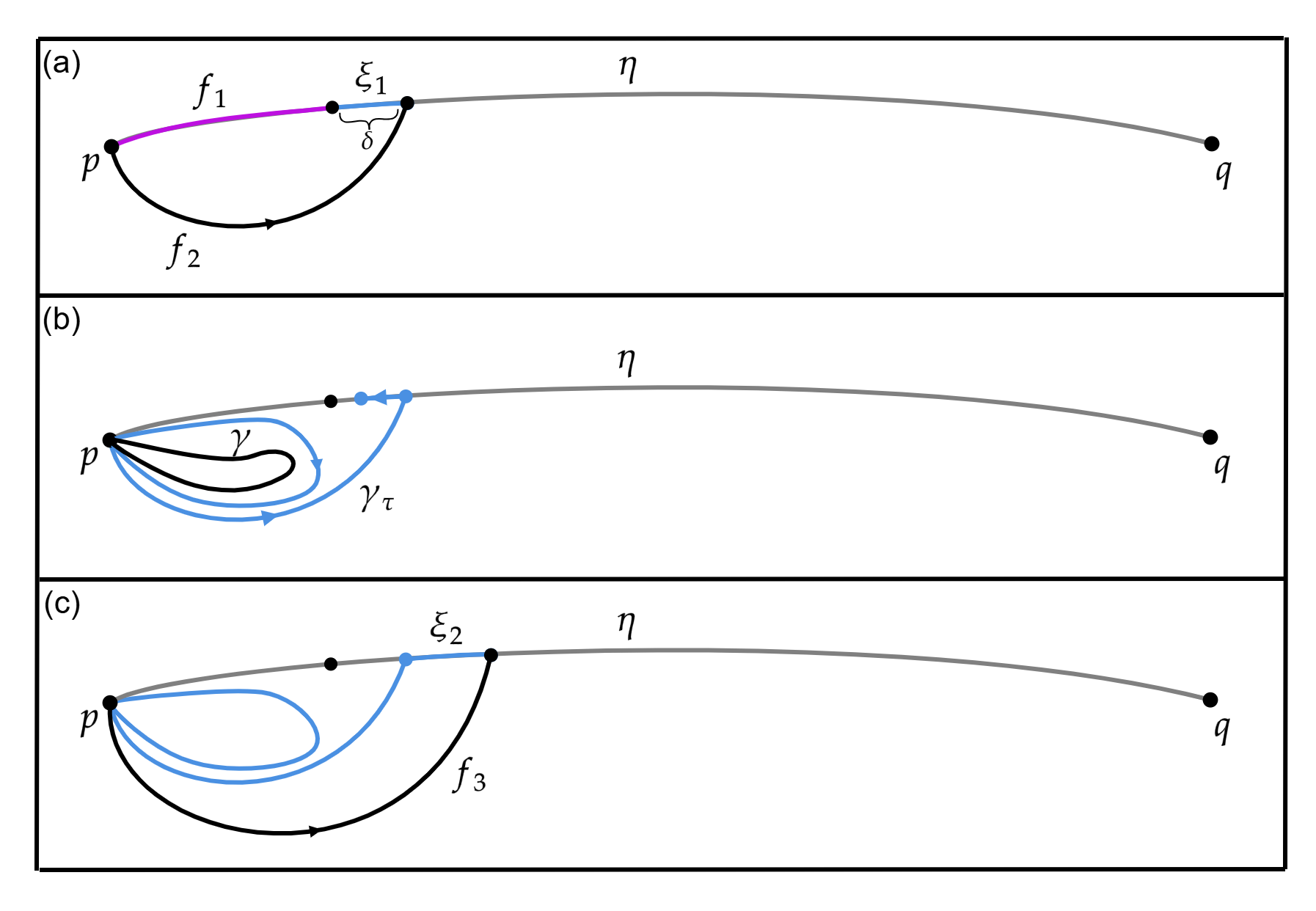}
    \caption{Illustration of the curves $f_i$ and $\xi_i$ in the proof of Lemma \ref{lemma:base_case}. } 
    \label{Fig2}
\end{figure}

\begin{proof}
    The proof of Part 2 follows immediately once we have proven Part 1 by making the additional assumption that the path $\eta$ starts at a point in $N$. We therefore proceed to prove Part 1. If $\eta$ is of length at most 
    $(2k-1)a$, we can simply take $\beta_t=\eta|_{[0,t]}$. Otherwise, we do the following. Let $d$ denote the extrinsic diameter of $V$ as measured in $M$. Choose $\delta$ small enough to satisfy Definition \ref{def:shortening} and such that $(2k-1)a+\delta\leq \ell(\eta)$.
    Parameterize $\eta$ by its arclength and define $f_1$ as the arc of $\eta$ from $\eta(0)$ to $\eta((2k-1)a)$. 
    Let $\xi_1$ be the arc of $\eta$, still parameterized by arclength, from $\eta((2k-1)a)$ to $\eta((2k-1)a+\delta)$. Connect $\eta((2k-1)a+\delta)$ and $\eta(0)$ by a minimizing geodesic in $M$, which we  denote by $f_2$, as in Figure \ref{Fig2} (a). Necessarily the length of $f_2$ is bounded by $d$ and hence by $a$.  So $f_1 *\xi_1*\overline{f_2}$ is a loop based at $\eta(0)$ of length at most
    $$\ell(\eta\mid_{[0,(2k-1)a+\delta]})+a=(2k-1)a+\delta+a= 2ka+\delta.$$
    Because $V$ satisfies the effective loop shortening property with parameters $c$, $a$, and $k$, we can homotope $f_1*\xi_1 *\overline{f_2}$ to a loop $\gamma$ of length at most $(2k-2)a$ over loops based at $p$ of length at most $c$. Then by Lemma \ref{lemma:loops_to_arcs} there exists a path homotopy $\gamma_\tau$, $\tau\in[0,1]$, between $f_1*\xi_1$ and $\overline{\gamma} *f_2$ over curves of length at most $$\ell(\gamma_\tau)\leq\ell(f_2)+c\leq a+c \leq 2c,$$ see Figure \ref{Fig2} (b). This allows us to construct a family of short segments connecting the points of $f_1$ with $p$ as follows. For $t\leq (2k-1)a$, let $\beta_t=\eta\mid_{[0,t]}$. Thus $\beta_{(2k-1)a}=f_1$. Now, for $t\in[{(2k-1)a,(2k-1)a+\delta]}$ we set $t'=\frac{t-(2k-1)a}{(2k-1)a+\delta}\in[0,1]$ and define $\beta_t$ by following the curve $\gamma_{t'}$ and then the portion of $\overline{\xi_1}$ of length $\delta t'$ starting at $\xi_1(\delta)$. Note that $\beta_{(2k-1)a+\delta}=\overline{\gamma}*f_2$.
    \par 
    Now consider the curve $\eta_0$ formed by replacing the segment $f_1*\xi_1$ of $\eta$ by $\overline{\gamma}*f_2$. This curve has length at most 
    \begin{align*}
        \ell(\eta)-\ell(f_1)-\ell(\xi_1)+\ell(\gamma)+\ell(f_2)
        &\leq 
        \ell(\eta)-(2k-1)a-\delta+(2k-2)a+a
        \\
        &= 
        \ell(\eta)-\delta.
    \end{align*}
    We now parameterize $\eta_0$ by arc length and repeat the previous step, that is, if $\eta_0$ is longer than $(2k-1)a$, then we pick $\delta_0$ small enough to satisfy Definition \ref{def:shortening} and such that $\left(2k-1\right)a+\delta_0\leq \ell(\eta_0)$. We then connect $\eta(0)$ and $\eta_0((2k-1)a + \delta_0)$ by a minimizing geodesic $f_3$ and define $\xi_2$ as the arc of $\eta$ from $\eta((2k-1)a)$ to $\eta((2k-1)a+\delta_0)$, as in Figure \ref{Fig2} (c). We then repeat our shortening procedure. After a finite number of steps, we obtain the required family of curves. 
\end{proof}

\noindent Note that $\beta_1$, which joins $p$ to itself, is not necessarily a point curve. This is because as $\beta_t$ moves around the curve $\eta$, it may get wrapped around a topological or geometric obstruction.


\begin{figure}[ht]
  \includegraphics[width=\textwidth]{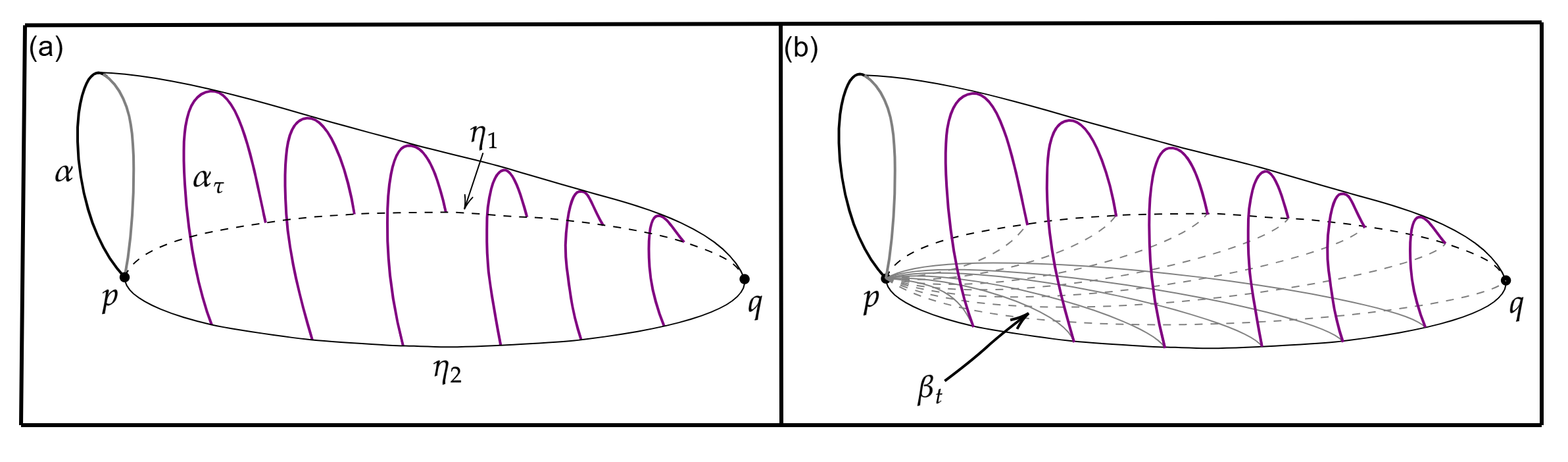}
    \caption{
    Illustration of the families of curves in the \hyperref[CSL]{Curve Shortening Lemma}. \textbf{(a)} The one-parameter family of curves $\alpha_{\tau}$ in $M$ with endpoints in $N$.
    \textbf{(b)} The family of curves $\beta_t$ in $N$ that connect the points of $\beta$ to the point $p$.}
  \label{fig:free_boundary_flow}
\end{figure}

\begin{CSL}
\label{CSL} 
    Let $M$ be a complete Riemannian manifold and $N$ a closed submanifold of $M$, with intrinsic diameter $D$.
    Suppose that there exist real numbers $c_0$, $a_0 \geq D$ and $k_0\geq 1$ such that for sufficiently small $\delta>0$, every loop based at a point in $N$ of length at most $2a_0k_0+\delta$ can be shortened in $N$ to a loop of length at most $(2k_0-2)a_0$ over loops with a fixed basepoint of length at most $c_0$.   
    \par
    Let $V \subset M$ be a compact set with extrinsic diameter $d$ that contains $N$. If $M$ has no stable geodesic chords orthogonal to $N$ of length at most $2k_0\max\{a_0,d\}$, then $V$ satisfies the effective loop shortening property restricted to $N$ with parameters $c=\max\{6c_0,2k_0d+5c_0\}$, $a=\max\{a_0,d\}$, and $k=k_0$.
\end{CSL}
    \begin{figure}[ht]
      \includegraphics[width=0.75\textwidth]{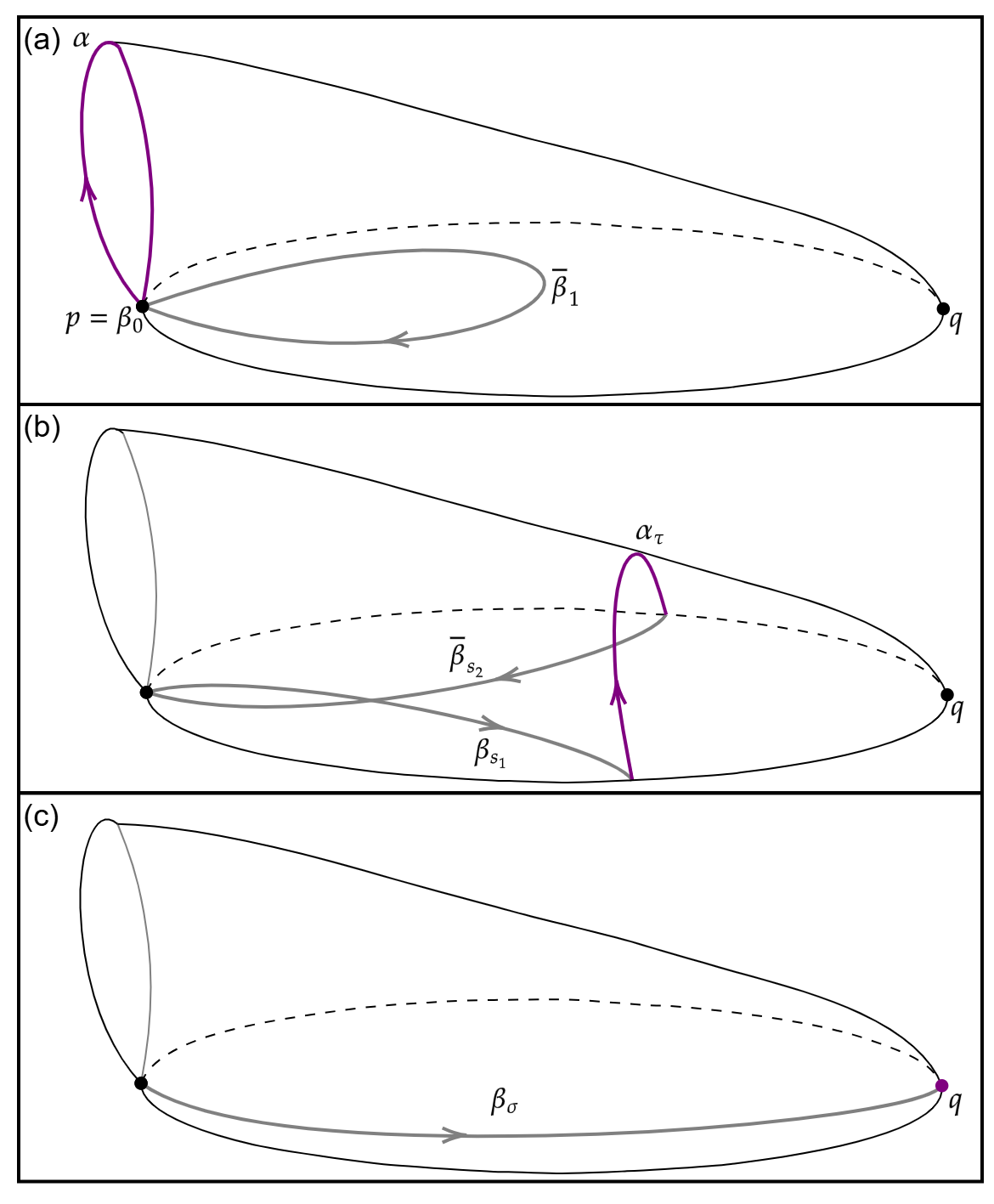}
        \caption{Construction of the homotopy from $\alpha$ to $p=\beta_0$ through loops based at $p$ in the proof of the \hyperref[CSL]{Curve Shortening Lemma}.}
      \label{fig:free_boundary_flow_2}
    \end{figure}
\begin{proof}
    Note that we must take $a=\max\{a_0,d\}$ to ensure that $a\geq d$, as required by the definition of the effective loop shortening property.
    Thus, consider a loop $\alpha$ in $V$ based on $N$ of length at most $2k_0\max\{a_0,d\}+\delta$. 
    First, we apply a free boundary curve shortening flow to $\alpha$. This is a curve shortening flow defined on curves with endpoints on the submanifold $N$. Under this flow, curves either contract in finite time or converge to a geodesic that is orthogonal to $N$ at its endpoints. For details of the free boundary curve shortening flow on manifolds with boundary, see \cite{ko2023} or Section 4 of \cite{hass1994}. This flow produces a one-parameter family of curves $\{\alpha_\tau\}_{\tau\in[0, 1]}$ in $M$ with endpoints on $N$. This family converges in finite time to some point $q$ (see Figure \ref{fig:free_boundary_flow} (a)), since by assumption there are no stable geodesic chords on $M$ orthogonal to $N$ of length less than or equal to $2k_0\max\{a_0,d\}$, and hence none of length less than or equal to $2k_0\max\{a_0,d\}+\delta$ for sufficiently small $\delta>0$.
    \par 
    Next, consider the trajectories $\eta_1(\tau)=\alpha_\tau(0)$ and $\eta_2(\tau)=\alpha_\tau(1)$. 
    Since the final curve in the family $\alpha_\tau$ is the point curve $q$, $\eta_1$ and $\eta_2$ share endpoints. Therefore we can define the closed curve $\eta=\eta_1*\overline{\eta_2}$ in $N$ parameterized by $[0,1]$. 
    By the effective loop shortening property, there exist real numbers $c_0$, $a_0 \geq D$ and $k_0\geq 1$ such that for sufficiently small $\delta>0$, every loop based at a point in $N$ of length at most $2a_0k_0+\delta$ can be shortened in $N$ to a loop of length at most $(2k_0-2)a_0$ over loops with a fixed basepoint of length at most $c_0$. Therefore
    we can apply Lemma \ref{lemma:base_case} to connect the points on $\eta$ with the point $p$ by a family of curves $\beta_t$ in $N$ for $t \in [0,1]$, as in Figure \ref{fig:free_boundary_flow} (b), and the length of the curves $\beta_t$ is bounded above by $2c_0$. Note that while $\beta_0$ is the constant curve $p$, $\beta_1$ is a possibly non-trivial loop in $N$ based at $p$, that, by Lemma \ref{lemma:base_case}, has length at most $(2k_0-2)a_0$, as in Figure \ref{fig:free_boundary_flow_2} (a).
    \par 
    Our final step is to construct a homotopy between $\alpha$ and $\beta_1$ through loops based at $p$ of length at most $6c_0$. Since $\beta_1$ is a loop of length at most $(2k_0-2)a_0$, this proves our claim. First note that $\alpha$ is homotopic to $\alpha*\overline{\beta_1} *\beta_1 $ through curves of length at most 
    $$
    \ell(\alpha)+2\ell(\beta_1)\leq 2k_0\max\{a_0,d\}+\delta +2(2k_0-2)a_0
    .$$
    Next, choose $\sigma$ so that $\beta_\sigma(1)=q$ (see Figure \ref{fig:free_boundary_flow_2} (c)). Then 
    $\alpha*\overline{\beta_1}=\beta_0*\alpha*\overline{\beta_1}$ 
    can be homotoped to $\beta_{\sigma}*\overline{\beta_{\sigma}}$ over loops of the form $\beta_{s_1}*\alpha_\tau*\overline{\beta_{s_2}}$, where the $s_i$ are defined by $\eta(s_1)=\alpha_\tau(0)$ and $\eta(s_2)=\alpha_\tau(1)$. Recall that the final curve in the family $\alpha_\tau$ is the point curve $q$ (see Figure \ref{fig:free_boundary_flow_2} (b)). Since $\ell(\alpha_\tau)\leq \ell(\alpha)$, these curves have length at most 
    $$\ell(\alpha_\tau)+2\max_t \ell(\beta_t)\leq 2k_0\max\{a_0,d\}+\delta + 4c_0.$$ 
    We can then contract $\beta_{\sigma}*\overline{\beta_{\sigma}}$ along itself to $p$, resulting in a homotopy from $\alpha*\overline{\beta_1}$ to $p$ through loops based at $p$.
    Thus,
    $\alpha*\overline{\beta_1} *\beta_1 $ is path homotopic to $\beta_1$ through loops based at $p$ of length at most 
    \begin{align*}
    \ell(\beta_1)+2k_0\max\{a_0,d\}+\delta + 4c_0
    &\leq 
    (2k_0-2)a_0+
    2k_0\max\{a_0,d\}+\delta + 4c_0\\
    &\leq 
    2k_0\max\{a_0,d\}+ 5c_0,
    \end{align*}
    since we have $2a_0k_0+\delta \leq c_0$ by the definition of the effective loop shortening property. 
    This completes our homotopy between $\alpha$ and $\beta_1$. Recall that the loop $\beta_1$ has length at most $(2k_0-2)a_0$. Therefore we have constructed a homotopy from the loop $\alpha$, which was an arbitrary loop of length at most $2k_0\max\{a_0,d\}+\delta$, to a loop of length at most $(2k_0-2)a_0$ through based point loops of length at most $2k_0\max\{a_0,d\}+5c_0$.     
    This proves the effective loop shortening property with the claimed parameters.
    Note that because $2a_0k_0\leq c_0$, we also have
    $$2k_0\max\{a_0,d\}+5c_0
    \leq 
    \max\{6c_0,2k_0d+5c_0\}
    \leq
    6c_0+2k_0d.\qedhere
    $$  
\end{proof} 

Let $N$ be as in the \hyperref[CSL]{Curve Shortening Lemma}. A special case of 
the above lemma occurs if every loop in $N$ of length at most $2D$ is contractible to a point curve over loops of length at most $c_0$ for some $c_0>0$. This means that $N$ satisfies the effective loop shortening property with parameters $c_0$, $a_0=D$, and $k_0=1$. In this instance, any compact subset $V\subset M$ of extrinsic diameter $d$ that contains $N$ satisfies the effective loop shortening property restricted to $N$ with parameters $c=\max\{6c_0,2d+5c_0\}$, $a=\max\{D,d\}$, and $k=1$. Thus, we immediately obtain the following corollary of the \hyperref[CSL]{Curve Shortening Lemma}.
\begin{cor} \label{cor:k=1}
    Let $M$ be a complete manifold and $N \subset M$ be a closed submanifold of intrinsic diameter $D$. Let $V$ be a compact subset of $M$ of extrinsic diameter $d$ containing $N$. Suppose that every loop in $N$ of length at most $2D$ is contractible over loops of length at most $c$. Suppose that there are no stable geodesic chords in $M$ orthogonal to $N$ of length at most $2\max\{D,d\}$. 
    Then $V$ satisfies the effective loop shortening property restricted to $N$ with parameters $\max\{6c,2d+5c\}, \max\{D,d\},$ and $1$. \qed
\end{cor}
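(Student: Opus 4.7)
The plan is to apply the Curve Shortening Lemma directly with the specific parameter choice $c_0 = c$, $a_0 = D$, and $k_0 = 1$, and observe that both hypotheses translate exactly to the assumptions made in the corollary.

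First, I would verify the loop shortening hypothesis inside $N$ required by the Curve Shortening Lemma. With $k_0 = 1$ and $a_0 = D$, the length bound $2a_0 k_0 + \delta$ becomes $2D + \delta$, and the target length $(2k_0-2)a_0$ becomes $0$, so shortening to a loop of length at most $(2k_0-2)a_0$ means contracting to a point curve. The hypothesis that every loop in $N$ of length at most $2D$ is contractible through loops of length at most $c$ therefore matches the Curve Shortening Lemma's requirement, up to the extra $\delta$ in the length threshold. I would address this small discrepancy by a short continuity observation: any loop of length $2D + \delta$ can be pre-processed (e.g.\ by linearly rescaling or by a brief curve-shortening step) into a loop of length at most $2D$ at the cost of a negligible increase in the maximum length along the deformation, and then contracted by the hypothesis.

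Next, the non-existence hypothesis concerning stable orthogonal geodesic chords transfers verbatim: the Curve Shortening Lemma asks that $M$ have no stable orthogonal geodesic chords to $N$ of length at most $2k_0 \max\{a_0, d\} = 2\max\{D, d\}$, which is precisely the assumption made in the corollary.

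With both hypotheses verified, the Curve Shortening Lemma yields that $V$ satisfies the effective loop shortening property restricted to $N$ with the parameters
\[
c = \max\{6c_0,\, 2k_0 d + 5c_0\} = \max\{6c,\, 2d + 5c\}, \qquad a = \max\{a_0, d\} = \max\{D, d\}, \qquad k = k_0 = 1,
\]
which is exactly the conclusion sought. The only step that requires more than a bare substitution is the $\delta$-adjustment noted above, and I expect this to be the main (very mild) obstacle; everything else is a direct specialization.
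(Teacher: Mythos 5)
Your proposal is correct and is essentially the paper's own argument: the corollary is obtained by specializing the Curve Shortening Lemma to $c_0=c$, $a_0=D$, $k_0=1$, which the paper presents as immediate (hence the \qed on the statement itself). Your extra care about the gap between loops of length at most $2D$ and the $2D+\delta$ threshold in the lemma's hypothesis is a point the paper silently elides, and your proposed fix is reasonable (and is harmless in the paper's applications, where the contraction bound holds for loops of any length).
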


We now explain how to utilize the effective loop shortening property to ensure the existence of short orthogonal geodesic chords. 
For convenience, we use the following definition due to Y. Liokumovich, D. Maximo, and the fourth author in \cite{lmr}.

 \begin{definition} 
    \label{def:delta_thin}  
    Consider a map $f: [-\epsilon, \epsilon]^m\times[0,1] \to M$. Let $\sigma_{x}$ be the linear path in $[-\epsilon, \epsilon]^m$ connecting $x \in \partial ([-\epsilon, \epsilon]^m)$ with the origin ${\bf0}=(0, \ldots, 0)$. Suppose there exists $\delta>0$ such that for each $(x,t)\in \partial ([-\epsilon, \epsilon]^m)\times [0,1]$ the length of $f(\sigma_{x},t)$ is at most $\delta$. Then $f$ is called a $\delta$-thin strip.  
 \end{definition}
 \noindent 
 Recall that $i\geq 1$ is the smallest number such that $\pi_{i}(\Omega_NM,N) \neq \{0\}$. We proceed by induction on $i$. 
 Our $i$-dimensional homotopy class is expressed as a map $\tilde{f}$ from the $(i+1)$-cell into $M$ with the image of the boundary of the cell lying in $N$. Given a $\delta>0$ we can subdivide the cell into $\delta$-thin strips. We then shorten the image of $\tilde{f}$ restricted to each individual strip while ensuring the strips agree on the boundaries so that they can be re-glued into a map homotopic to $\tilde{f}$. This is done in Lemma \ref{lemma:homotopic_to_short_map}, with an important technical result needed for the induction step proven in Lemma \ref{lemma:extending_delta_thin}. Since the case where $i=1$ can be proven both directly and by induction, we now state this case and give the direct proof. Note that for convenience we will instead deal with the map $f:D^i\times[0,1]\to M$ induced by including the parameter of each path in the image of $\tilde{f}$ into the domain of the map.
 
    \begin{figure}[ht]
        \centering
        \includegraphics[width=\textwidth]{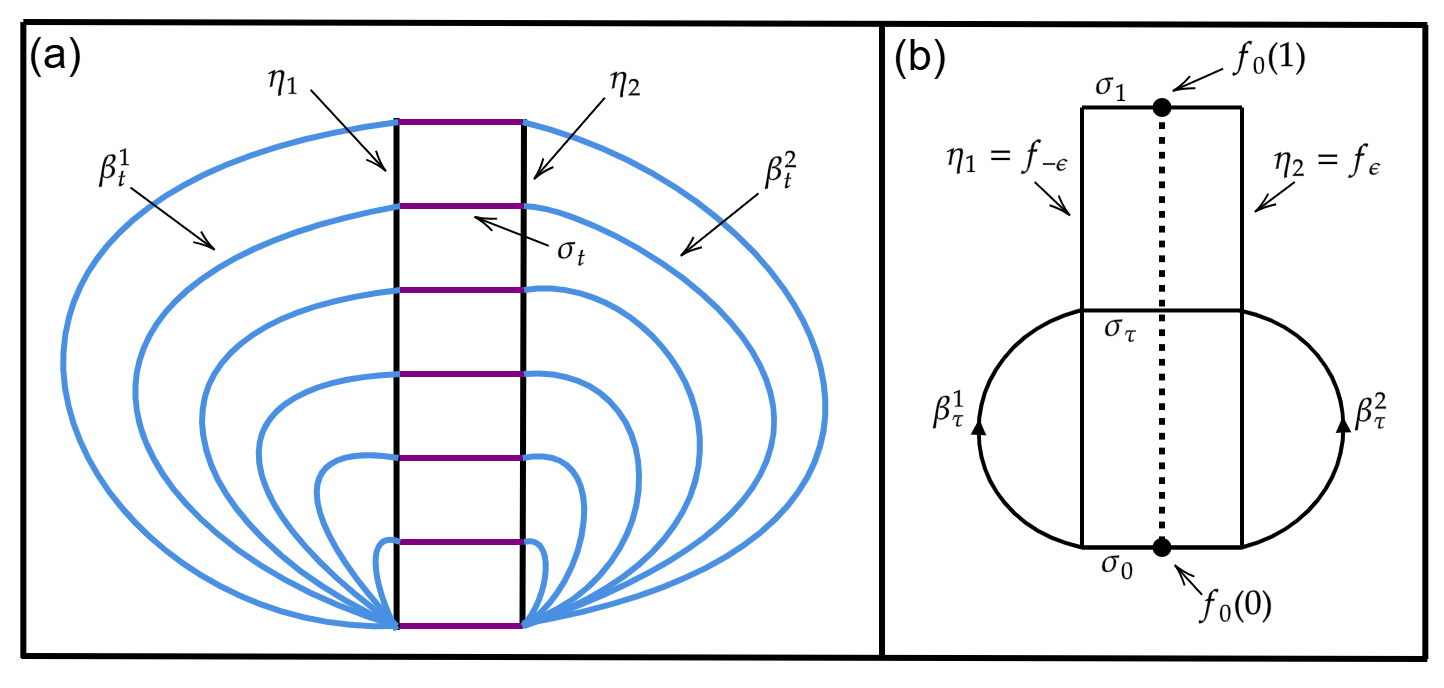}
        \caption{
        \textbf{(a)}
        The one-parameter family of curves $\beta_t^i$ of length at most $2c$.
        \textbf{(b)}
        The loop that needs to be filled to complete the homotopy $F_\tau$. 
        } 
        \label{fig:a_t_i}
    \end{figure}
 
\begin{lem} 
\label{lemma:example_case}
    Let $N$ be a closed submanifold of a complete manifold $M$.
    Let $f: [-\epsilon, \epsilon]\times[0,1] \to M$ be a $\delta$-thin strip such that  $f(x,0),f(x,1)\in N$ for every $x \in [-\epsilon, \epsilon]$. Suppose that 
    $V\subseteq M$ contains the image of $f$ and satisfies the effective loop shortening property restricted to $N$ with parameters $c$, $a$, and $k$. Then there exists a map $g: [-\epsilon, \epsilon]\times[0,1]\to M$ homotopic to $f$ relative to $f([-\epsilon, \epsilon]\times\{0,1\})\subset N$ such that the length of every curve in the image of $g$ is bounded by $6c+5\delta$. 
\end{lem}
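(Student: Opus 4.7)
The plan is to apply Lemma~\ref{lemma:base_case} (Part~2) to the central vertical slice of $f$ to produce a short ``spine'' curve, and then define $g$ so that each vertical slice consists of a short segment along the top boundary into the spine, the spine itself, and a short segment along the bottom boundary out of it.

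First I set $p = f(0, 0) \in N$ and let $\eta(t) = f(0, t)$, a path in $V$ beginning in $N$. By Lemma~\ref{lemma:base_case} (Part~2) there is a one-parameter family $\{\beta_t\}_{t \in [0, 1]}$ of curves in $M$ with $\beta_t(0) = p$, $\beta_t(1) = f(0, t)$, and $\ell(\beta_t) \leq 2c$; moreover, by the construction in the proof of that lemma, $\beta_0$ is the constant curve at $p$. Writing $h_i(x) = f(x, i)$ for $i = 0, 1$, I define
\[
g(x, t) = \begin{cases} h_0\bigl((1-3t)x\bigr), & t \in [0, 1/3], \\ \beta_1(3t-1), & t \in [1/3, 2/3], \\ h_1\bigl((3t-2)x\bigr), & t \in [2/3, 1]. \end{cases}
\]
By the $\delta$-thin hypothesis each sub-arc of $h_i$ from $f(0, i)$ to $f(x, i)$ has length at most $\delta$, so each vertical slice $g(x, \cdot)$ has length at most $\delta + 2c + \delta$, which lies well within $6c + 5\delta$. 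The map $g$ is continuous in $(x, t)$ and agrees with $f$ on $[-\epsilon, \epsilon] \times \{0, 1\}$.

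Next I construct the homotopy from $f$ to $g$ rel the top and bottom edges in two stages that fit together continuously in $x$. Stage~(a), the \emph{subrectangle homotopy}: for each $x$, the restriction of $f$ to the sub-rectangle between the vertical slices at $0$ and at $x$ provides a 2-cell yielding a path homotopy rel endpoints from $f(x, \cdot)$ to the concatenation $\overline{h_0|_{[0, x]}} \ast \eta \ast h_1|_{[0, x]}$ that runs through $\eta$; this stage is trivial at $x = 0$. Stage~(b), the \emph{spine homotopy}: the two-parameter map $B(s, t) = \beta_t(s)$ has boundary loop $\beta_0 \ast \eta \ast \overline{\beta_1}$ (with the fourth edge constant at $p$), and since $\beta_0$ is constant this yields a path homotopy rel endpoints from $\eta$ to $\beta_1$. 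Composing (a) and (b) produces, for each $x$, a path homotopy rel endpoints from $f(x, \cdot)$ to $g(x, \cdot)$, and these assemble continuously in $x$ into the required homotopy from $f$ to $g$.

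The main technical obstacle is verifying this continuous assembly in $x$: the subrectangle stage must be parametrized so that it degenerates smoothly at $x = 0$ and composes continuously there with the spine stage, and the combined family of path homotopies must respect the fixed top and bottom boundaries uniformly in $x$. Once this bookkeeping is handled, the length bound on vertical slices of $g$ follows directly from the triangle inequality applied to the concatenation defining $g$.
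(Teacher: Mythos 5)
Your proof is correct, but it takes a genuinely different route from the paper's. The paper anchors the construction at the two lateral edges of the strip: it applies Lemma~\ref{lemma:base_case} to both $\eta_1=f_{-\epsilon}$ and $\eta_2=f_{\epsilon}$ to obtain two families $\beta^1_t,\beta^2_t$, forms the curves $\tilde{\eta}_1,\tilde{\eta}_2$ by attaching the short horizontal arcs $\sigma_0,\sigma_1$, and then defines $g$ to be the path homotopy between $\tilde{\eta}_1$ and $\tilde{\eta}_2$ produced by Lemma~\ref{lemma:loops_to_arcs}; the intermediate curves of that interpolation are what force the bound $6c+5\delta$. You instead anchor at the central slice $\eta=f_0$, apply Lemma~\ref{lemma:base_case} only once, and make every vertical slice of $g$ the single spine $\beta_1$ conjugated by sub-arcs of the top and bottom edges, each of length at most $\delta$ by $\delta$-thinness; this yields the sharper bound $2c+2\delta\leq 6c+5\delta$. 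Your homotopy from $f$ to $g$ is legitimate: the subrectangle sweep to the central slice is rel endpoints, and the square $B(s,t)=\beta_t(s)$ gives the path homotopy $u\mapsto\beta_u\ast\eta|_{[u,1]}$ from $\eta$ to $\beta_1$ (using that $\beta_0$ is the point curve at $p$, which does follow from the construction in the proof of Lemma~\ref{lemma:base_case}); this is the same mechanism the paper uses for its own homotopy $F_\tau$. Note also that the statement only bounds the lengths of curves in the image of $g$, not of the intermediate maps, so the long vertical slices appearing in your stage~(a) are harmless. The one thing your construction gives up is that $g(\pm\epsilon,\cdot)$ is no longer (essentially) the curve $\beta^j_1$ assigned to the corresponding vertex; this compatibility of lateral edges is what the paper's version provides for gluing adjacent strips in the induction of Lemma~\ref{lemma:homotopic_to_short_map}, but it is not required by the statement of the lemma as posed, so your argument proves it in full.
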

    \begin{figure}[ht]
        \includegraphics[width=\textwidth]{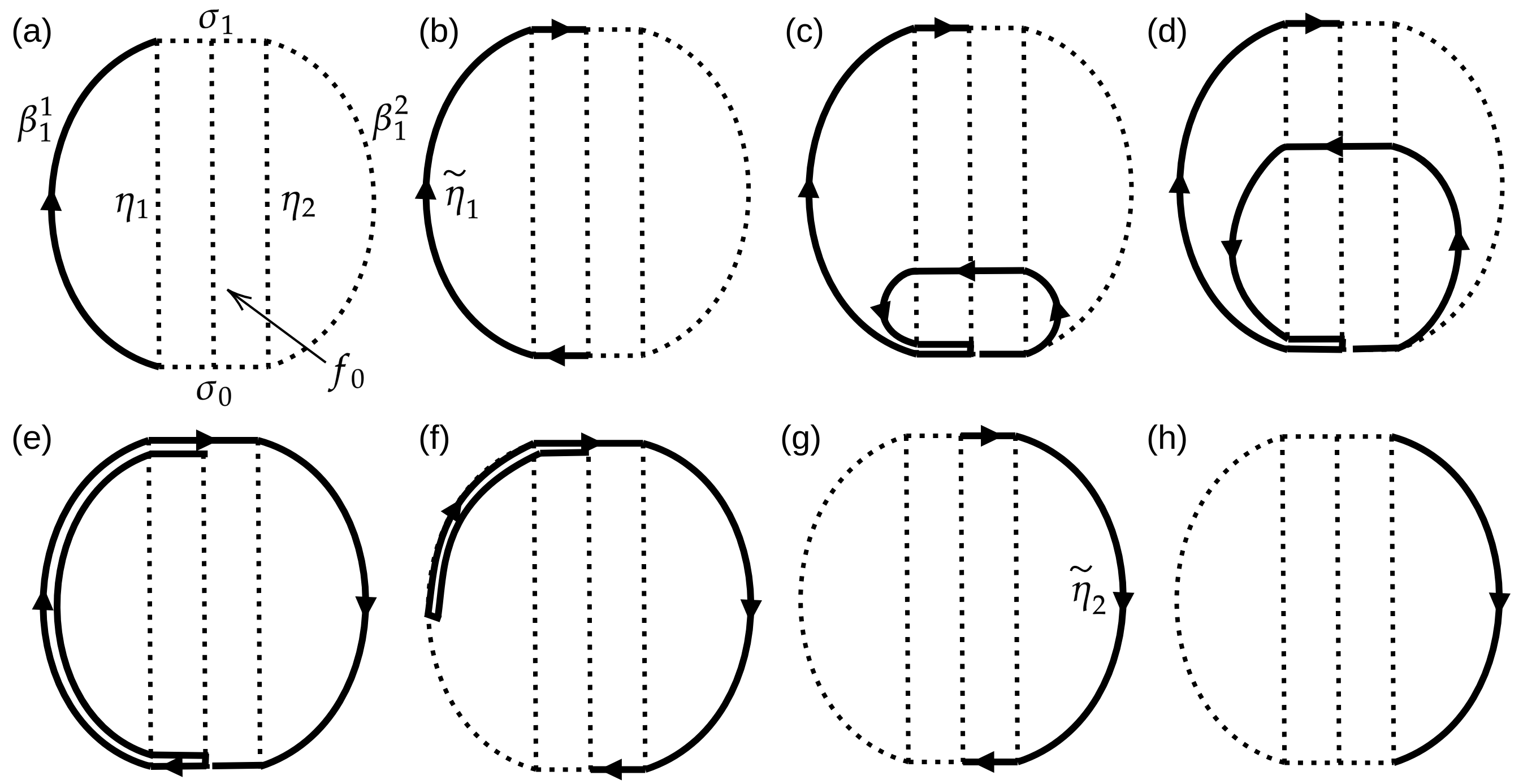}
        \caption{The construction of the homotopy between $\beta_1^1$ and $\beta_1^2$.}
        \label{e_i_homotopy}
    \end{figure}

\begin{proof}
    For convenience, we denote $f(x,t)$ by $f_x(t)$. Let $\eta_1(t)=f_{-\epsilon}(t)$ and $\eta_2(t)=f_{\epsilon}(t)$. Because $M$ satisfies the effective loop shortening property restricted to $N$ with parameters $c$, $a$, and $k$, for $i=1,2$, by Lemma \ref{lemma:base_case} there exists a one-parameter family of curves $\beta^i_t$ such that $\beta^i_t(0)=p$, $\beta^i_{t}(1)=\eta_i(t)$, and the length of each $\beta^i_t$ is at most $2c$ (see Figure \ref{fig:a_t_i} (a)).  
    Define the curve $s\mapsto \sigma_t(s)$ as $s\mapsto f_s(t)$. Note that because $f$ is a $\delta$-thin strip, each $\sigma_t$ has length at most $\delta$. We will show that
    \begin{align*}
        \tilde{\eta}_1=\overline{\sigma_0}|_{[-\epsilon,0]}*\beta^1_1*\sigma_1|_{[-\epsilon,0]}
    \end{align*}
    and
    \begin{align*}
        \tilde{\eta}_2=\sigma_0|_{[0,\epsilon]}*\beta^2_1*\overline{\sigma_1}|_{[0,\epsilon]}
    \end{align*}
    are homotopic relative to $f([-\epsilon,\epsilon]\times\{0,1\})$ (see Figure \ref{e_i_homotopy}). This homotopy is through curves that have length at most $6c+5\delta$ and whose endpoints lie on $N$. We define $g$ to be this homotopy. 
    \par 
    The homotopy proceeds as follows. The curve $\beta^1_1$ is homotopic to $\tilde{\eta}_1$ relative to $f([-\epsilon,\epsilon]\times\{0,1\})$ by moving both of the endpoints $\beta^1_1(0)$ and $\beta^1_1(1)$ along $\sigma_0(t)$ and $\sigma_1(t)$ respectively, ending at $\sigma_0(0)$ and $\sigma_1(0)$ as in Figure \ref{e_i_homotopy} (b). Similarly, $\beta^2_1$ is relatively homotopic to $\tilde{\eta}_2$. Thus, in order to complete our homotopy we need to ``fill in'' the domain formed by $\tilde{\eta}_1$ and $\tilde{\eta}_2$ with interpolating curves. 
    This domain is bounded by the loop based at $f_{0}(0)$ given by 
    \begin{align*}
        \tilde{\eta}_1*\overline{\tilde{\eta}_2}=\overline{\sigma_0}|_{[-\epsilon,0]}*\beta^1_1 *\sigma_1*\overline{\beta^2_1}*\overline{\sigma_0}|_{[0,\epsilon]}.
    \end{align*}
    We can contract this loop to $\overline{\sigma_0}|_{[-\epsilon,0]}*\sigma_0*\overline{\sigma_0}|_{[0,\epsilon]}\subset N$ over the family of loops
    \begin{align*}
        \overline{\sigma_0}|_{[-\epsilon,0]} *\beta^1_{1-t}*\sigma_{1-t}*\overline{\beta^2_{1-t}}*\overline{\sigma_0}|_{[0,\epsilon]}
    \end{align*}
    for $t\in[0,1]$.
    Note that the lengths of these loops do not exceed $4c+3\delta$.
    Thus by Lemma \ref{lemma:loops_to_arcs}, $\tilde{\eta}_1$ and $\tilde{\eta}_2$ are homotopic relative to $f([-\epsilon,\epsilon]\times\{0\})$ through curves of length at most 
    \begin{align*}
        \ell(\tilde{\eta}_1)+4c+3\delta\leq 6c+5\delta,
    \end{align*} 
     as in Figure \ref{e_i_homotopy} (b)--(g). We reparameterize each individual curve by the unit interval in this homotopy and reparameterize the homotopy itself by $[-\epsilon,\epsilon]$. The resulting homotopy defines $g$.
    \par    
    We now prove that there exists a homotopy $F_\tau(x,t):([-\epsilon, \epsilon]\times[0,1])\times[0,1]\to M$ from $F_0=f$ to $F_1=g$. We do this by using the two families of curves $\{\beta^i_t\}_{t\in[0,1]}, i\in\{1, 2\}$ but change the variable to $\tau$, thus creating two ``new" families of curves $\{\beta^i_{\tau}\}_{\tau\in[0,1]}, i\in\{1, 2\}$.
    We then build the homotopy for each $\tau\in [0, 1]$ by shortening the original curves and slowly replacing them by curves that interpolate between the curves $\beta^1_\tau$ and $\beta^2_\tau$ as follows. 
    Consider the loop
    \begin{align*}
        \overline{\sigma_0}|_{[-\epsilon,0]}*\beta^1_\tau *\sigma_\tau*\overline{\beta^2_\tau}*\overline{\sigma_0}|_{[0,\epsilon]},
    \end{align*}
    which is shown in Figure \ref{fig:a_t_i} (b). For each $\tau$, we can construct a continuous family of curves $s\mapsto\xi_s^\tau$ between $\beta^1_\tau$
    and $\beta^2_\tau$ with endpoints on $f(\{0,\tau\}\times[0,1])$,
    as we did above in the case that $\tau=1$ (see Figure \ref{fig:eta_tau}). In fact, we can take the portion of the homotopy obtained above starting at $\tau$ instead of 1. Let $r_{x,\tau}$ be the arc of $\sigma_{x,\tau}$ connecting $\xi_x^\tau(1)$ and $f_x(\tau)$.
    We then define 
    $$F_\tau(x,\cdot)=\xi_x^\tau
    *r_{x,\tau}*f_x(s)|_{s\in[\tau, 1]}*\sigma_1|_{[x,\epsilon]},$$ 
    with parameterization by the unit interval (see Figure \ref{fig:F_tau}). Note that indeed $F_0=f$ and $F_1=\xi_1=g$ mod $N$. 
\end{proof}

    \begin{figure}[ht]
        \includegraphics[width=0.6\textwidth]{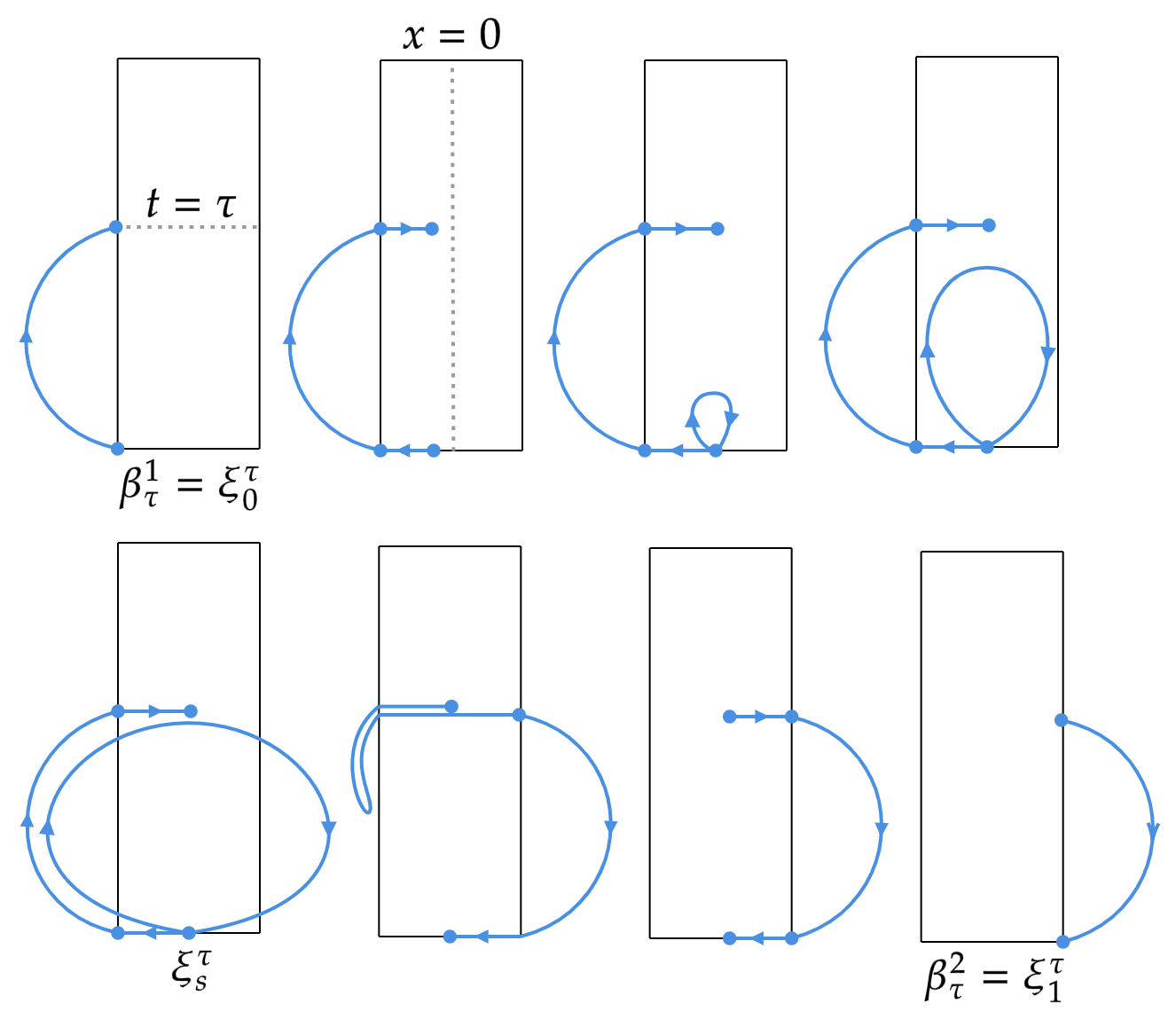}
        \caption{
        An example of the homotopy $s\mapsto \xi^\tau_s$ between $\beta^1_\tau=\xi^\tau_0$ and $\beta^2_\tau=\xi^\tau_1$.} 
        \label{fig:eta_tau}
    \end{figure}
    
    \begin{figure}[ht]
        \includegraphics[width=0.75\textwidth]{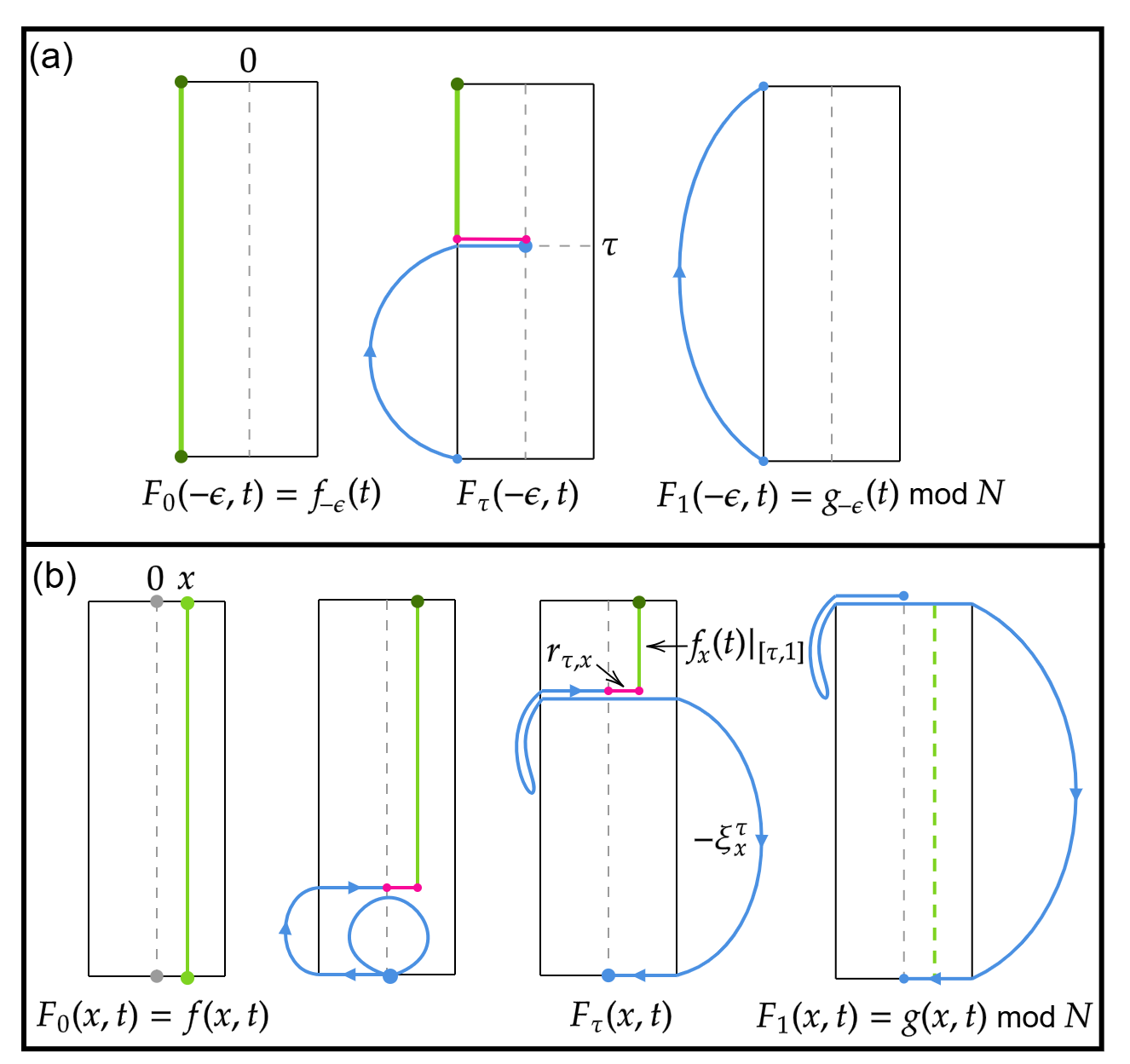}
        \caption{
        Two examples of the homotopy $\tau\mapsto F_{\tau}(x,t)$. \textbf{(a)} The homotopy $\tau\mapsto F_{\tau}(-\epsilon,t)$. \textbf{(b)} The homotopy $\tau\mapsto F_{\tau}(x,t)$ for some $0<x<\epsilon$.} 
        \label{fig:F_tau}
    \end{figure}

    The following technical lemma will help us define the homotopy between $\tilde{f}$ and the shorter map $\tilde{f}$ we will construct in Lemma \ref{lemma:homotopic_to_short_map}.
    Note that this lemma is a minor variation of Lemma 5.3 in \cite{linear_bounds_group_paper}.

\begin{lem} 
    \label{lemma:extending_delta_thin}
    Let $N$ be a closed submanifold of a complete manifold $M$. Suppose we have the following two maps:
    \begin{itemize}
        \item 
        Let 
        $$f:  [-\epsilon, \epsilon]^m\times[0,1] \to M$$ 
        be a $\delta$-thin strip such that for each $x \in [-\epsilon, \epsilon]^m$, $f_x(0) \in N$. 
        \item
        Let $$H:\partial ([-\epsilon, \epsilon]^m) \times [0,1] \to\Omega^{L}M$$ such that $H(x,0)\in N$ and $H(x,1)=f_x(1)$.
    \end{itemize}
    Suppose that $V\subseteq M$ contains the image of $f$ and satisfies the effective loop shortening property with parameters $c$, $a$, and $k$. Then for some small $\delta>0$, one can extend $H$ to $$H':[-\epsilon, \epsilon]^m \times [0,1]\to \Omega^{L^1} M,$$ such that $ H'(x,0)\in N$, $H'(x,1)=f_x(1)$, and $L^1= L+4c+2\delta$.
\end{lem}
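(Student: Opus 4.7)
The plan is to build $H'$ by a radial coning of $H$ along the $\delta$-thin strip structure of $f$, with the effective loop shortening property controlling lengths where the coning fails to be continuous. Throughout, let $c$, $a$, $k$ be the parameters of the effective loop shortening property on $V$.

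For the setup, I would parameterize radially: each $x \in [-\epsilon,\epsilon]^m \setminus \{\mathbf{0}\}$ is written uniquely as $x = \sigma_{y(x)}(r(x))$ with $y(x) \in \partial([-\epsilon,\epsilon]^m)$ and $r(x) \in [0,1)$, so that $y(x) = x$ and $r(x) = 0$ on $\partial$. For each pair $(x,t)$ I define the \emph{transport arc} $\mu_{x,t}(s) = f(\sigma_{y(x)}(s),t)$ for $s \in [0, r(x)]$. By the $\delta$-thin strip hypothesis, $\ell(\mu_{x,t}) \leq \delta$; moreover $\mu_{x,0}$ lies in $N$ since $f(\cdot,0) \subset N$, and $\mu_{x,1}$ terminates at $f_x(1)$.

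Next I would define $H'(x,t) = H(x,t)$ on the boundary, and on the interior take $H'(x,t)$ to be the curve obtained from $H(y(x),t)$ by appending a suitable sub-arc of $\mu_{x,t}$ at each end of its parameterization. The choice is calibrated using $\mu_{x,0} \subset N$ and the fact that $\mu_{x,1}$ ends at $f_x(1)$, so that the required endpoint conditions $H'(x,0) \in N$ and $H'(x,1) = f_x(1)$ hold, while the formula agrees with $H$ on $\partial$ because $\mu_{y,t}$ is trivial there. This gives a continuous extension along each radial ray.

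The delicate part is making the assignment continuous in $x$ across neighboring rays (and at the origin, where $y(x)$ is singular). The defect between two competing candidate extensions is captured by a loop based in $V$ formed out of the two transport arcs together with the $\delta$-thin connecting curve in the boundary data. By choosing $\delta$ small enough this defect loop can be arranged to have length at most $2ak + \delta$, so the effective loop shortening property provides a homotopy through based loops of length at most $c$. Applying Lemma \ref{lemma:loops_to_arcs} upgrades this into a path homotopy between the competing arcs through curves of length at most $L + 2c + \delta$. Performing this correction at each of the two ends of the parameterization of $H(y(x),t)$ contributes $4c$, and the two transport arcs contribute $2\delta$, producing the claimed bound $L^1 \leq L + 4c + 2\delta$.

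The main obstacle I expect is precisely this closure step: verifying that the loop-shortening homotopies produced by the effective loop shortening property can be selected continuously in the radial parameter $x$, so that $H'$ is a genuine continuous map on $[-\epsilon,\epsilon]^m \times [0,1]$. This mirrors the technical role played by Lemma 5.3 of \cite{linear_bounds_group_paper}, adapted here to the submanifold setting where the $t = 0$ boundary is allowed to lie anywhere in $N$ rather than being pinned at a single basepoint.
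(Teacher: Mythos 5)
Your radial coning of $H$ along the transport arcs cannot work as described, and the step you flag as the ``main obstacle'' is a genuine gap rather than a technicality. The difficulty is concentrated at the origin: as $x\to\mathbf{0}$ the boundary point $y(x)$ has no limit, and for two distinct rays the candidate curves built from $H(y_1,t)$ and $H(y_2,t)$ differ by far more than a short arc. The ``defect loop'' between them has length on the order of $2L$ plus transport arcs, not $2ak+\delta$, so the effective loop shortening property simply does not apply to it; and even where it did apply, you would still need to choose the shortening homotopies continuously in $x$, which the property does not provide. Your length accounting (``$4c$ from the two ends'') also has no identified source, since no curve of length at most $2c$ ever actually appears in your construction.

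The paper's proof avoids all of this by invoking the loop shortening property exactly once, through Lemma \ref{lemma:base_case} applied to the central path $t\mapsto f_{\mathbf{0}}(t)$: this yields a family $\beta_t$ joining $f_{\mathbf{0}}(0)$ to $f_{\mathbf{0}}(t)$ with $\ell(\beta_t)\le 2c$. Near the origin ($|x|\le\epsilon/4$) the extension is just $\beta_t*\sigma_{x,t}$ and does not reference $H$ at all, which removes the singularity at $\mathbf{0}$. On the annulus $\epsilon/4<|x|\le 3\epsilon/4$ one interpolates by running $H(y,\cdot)$ only up to the rescaled time $s(x)t$ and then returning to the central family along $\overline{\sigma_{y,s(x)t}}*\overline{\beta_{s(x)t}}$ before appending $\beta_t*\sigma_{x,t}$; continuity reduces to checking that the formulas match at $|x|=\epsilon/4$, $3\epsilon/4$, and $\epsilon$. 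The two copies of $\beta$-curves account for the $4c$ and the two transport arcs for the $2\delta$. You have the right ingredients (transport arcs, radial structure), but you are missing the central family $\beta_t$ and the time-rescaling interpolation, which are precisely what make the extension both continuous and short.
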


\begin{proof}
    We extend $H$ to $[-\epsilon,\epsilon]^m \times [0,1]$ as follows. First, note that by Lemma \ref{lemma:base_case}, for $\textbf{0} =(0,\ldots,0)\in [-\epsilon, \epsilon]^m$ there is a one-parameter family of curves $\beta_t$ such that $\beta_t(0)=f_{\textbf{0}}(0)$, $\beta_t(1)=f_{\textbf{0}}(t)$, and each $\beta_t$ has length bounded by $2c$. Given $x\in[-\epsilon, \epsilon]^m$, we show how to define $H'(x,t)$. Let $\sigma_x$ be the line segment connecting $x$ and $\textbf{0}$. Let $s(x) = 
    2(|x| - \epsilon/4)/\epsilon$ and define $\sigma_{x,t}=f(\sigma_x,t)$. 
    Note that the path $\sigma_{x,t}$ is a higher-dimensional generalization of the path $\sigma_t$ used in the previous lemma.
    Lastly, let $y$ be the boundary point on the ray from $\textbf{0}$ through $x$. 
    Then we define
    \begin{align*}
        H'(x,t)&=
        \beta_{t}
        *\sigma_{x, t} 
    \end{align*}
    when $|x|\in[0,\epsilon/4]$,
    \begin{align*}
        H'(x,t)=
        H(y, s(x)t)
        *\overline{\sigma_{y, s(x)t}}
        *\overline{\beta_{s(x)t}}
        *\beta_{t}
        *\sigma_{x, t} 
    \end{align*}
    when $|x|\in(\epsilon/4,3\epsilon/4]$, and
    \begin{align*}
        H'(x,l)&=
        H(y, t)
        *\overline{\sigma_{y, t}}
        *\overline{\beta_{t}}
        *\beta_{t}
        *\sigma_{x, t}
    \end{align*}
    when $|x|\in(3\epsilon/4,\epsilon]$. Note that $H'(x,0)\in N$ and $H'(x,1)=f_x(1)$ as desired. Moreover, $H'(x,t)$ has length at most
    \begin{align*}
        L+2\max_t\ell(\beta_t)+2\max_{x,t}\ell(\sigma_{x,t})
        \leq L+4c+2\delta,
    \end{align*}
    as claimed.
\end{proof}

In the next lemma, we establish the base case and the induction step the main lemma required for the proof of the main result. 

\begin{lem} 
\label{lemma:homotopic_to_short_map} 
    Let $N$ be a closed submanifold of a complete Riemannian manifold $M$. 
   Consider a map $\tilde{f}:(D^i, \partial D^i) \to (\Omega_NM,N)$ and its induced map $f:D^i\times[0,1] \to M$.
    Let $V$ be a compact subset of $M$ of extrinsic diameter $d$ that contains $N$ and the image of $f$.
    Suppose $V$ satisfies the effective loop shortening property restricted to $N$ with parameters $c, a$, and $k$. Then for any $\delta>0$ there exists a map $\tilde{g}:(D^i, \partial D^i) \to (\Omega_N^LM, N)$, homotopic to $\tilde{f}$, such that $L=2c(2i+1)+\delta$.
\end{lem}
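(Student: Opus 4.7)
The plan is to subdivide $D^i$ into a sufficiently fine cubulation and construct the homotopy $H: D^i \times [0,1] \to \Omega_N M$ from $\tilde{f}$ to the desired $\tilde{g}$ by induction on the skeleta $V_0 \subset V_1 \subset \cdots \subset V_i = D^i$. Fix a small auxiliary parameter $\delta' > 0$ (chosen explicitly below so the accumulated error fits within $\delta$); by uniform continuity of $f$ on its compact domain, we may take the cubulation fine enough that for every closed cube $C$ of the cubulation, $f|_{C \times [0,1]}$ is a $\delta'$-thin strip in the sense of Definition \ref{def:delta_thin}. The strategy is that the length bound $L_k$ on curves in the image of $H|_{V_k \times [0,1]}$ grows by $4c + 2\delta'$ with each increment of $k$, starting from $L_0 = 2c$.

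I will establish the base of the induction, on $V_0$, using Lemma \ref{lemma:base_case}. For each vertex $v \in V_0$ lying in the interior of $D^i$, the path $t \mapsto f(v, t)$ begins at $f(v,0) \in N$, so Lemma \ref{lemma:base_case} yields a one-parameter family $\beta^v_t$ of curves of length at most $2c$ with $\beta^v_t(1) = f(v, t)$, from which one extracts a pointwise homotopy between $\tilde{f}(v)$ and a shorter curve $\tilde{g}(v)$ through curves of length at most $2c$. Vertices in $\partial D^i$ need no modification, since $\tilde{f}(v)$ is already the point curve at a point of $N$. Hence $L_0 = 2c$.

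For the inductive step on skeleta, suppose $H$ has been defined on $V_{k-1} \times [0,1]$ with image in $\Omega_N^{L_{k-1}} M$. On each closed $k$-cell $C$ of the cubulation, I apply Lemma \ref{lemma:extending_delta_thin} with the $\delta'$-thin strip $f|_{C \times [0,1]}$ and boundary data $H|_{\partial C \times [0,1]}$, obtaining an extension over $C \times [0,1]$ through curves of length at most $L_{k-1} + 4c + 2\delta'$. Thus $L_k = L_{k-1} + 4c + 2\delta'$, which iterates to $L_i = 2c + 4c \cdot i + 2i \delta' = 2c(2i+1) + 2i\delta'$. Choosing $\delta' \leq \delta/(2i)$ at the start gives the required bound $L_i \leq 2c(2i+1) + \delta$. (For the base case $i = 1$, this reproduces the bound of Lemma \ref{lemma:example_case} up to absorbing constants into $\delta$.)

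The main obstacle is ensuring that the cell-by-cell extensions from Lemma \ref{lemma:extending_delta_thin} glue into a globally well-defined homotopy on $D^i \times [0,1]$. Two adjacent $k$-cells $C, C'$ share a $(k-1)$-face on which both extensions coincide with the already-constructed $H|_{V_{k-1} \times [0,1]}$; since Lemma \ref{lemma:extending_delta_thin} preserves its prescribed boundary data, the extensions agree on $C \cap C'$ and thus glue continuously across the full $k$-skeleton. A related technical point concerns cells that meet $\partial D^i$: on the portion of the boundary lying in $\partial D^i$ the data $H$ consists of point curves on $N$, and because the formulas in Lemma \ref{lemma:extending_delta_thin} only depend on the prescribed boundary data and the strip $f|_C$ itself, this triviality is preserved, so the constructed homotopy remains a map of pairs $(D^i, \partial D^i) \to (\Omega_N M, N)$ throughout.
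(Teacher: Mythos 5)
Your proposal is correct and follows essentially the same route as the paper: subdivide $D^i$ into cells fine enough that $f$ restricted to each is a $\delta$-thin strip, establish the $0$-skeleton via Lemma \ref{lemma:base_case}, extend over the $k$-skeleton via Lemma \ref{lemma:extending_delta_thin} with the length bound growing by $4c+2\delta'$ per dimension, and absorb the error by shrinking $\delta'$. (The only quibble is that the pointwise homotopy from $\tilde{f}(v)$ to $\tilde{g}(v)$ passes through concatenations of length up to $2c+\ell(\tilde f(v))$, not $2c$, but this is immaterial since the connecting homotopy need not be short.)
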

\begin{proof}
    Fix some $\delta>0$ and partition $D^i$ into very small rectangles $R_i$ so that the restriction of $\tilde{f}$ to each $R_i$ is a $\delta$-thin strip in $M$. 
    We construct $\tilde{g}$ by induction on the skeleta of $D^i$. 
    In a similar fashion, we construct by induction a homotopy $F_t$ such that $F_0=\tilde{f}$ and $F_1=\tilde{g}$.
    
    We begin with the base case of our induction, which is the definition of $\tilde{g}$ on the $0$-skeleton of $R_i$. By Lemma \ref{lemma:base_case}, given a vertex $v_j$ in the $0$-skeleton of $R_i$ and the curve $\eta_j=\tilde{f}(v_j)$, there exists a family of curves $\beta^j_t$ in $M$ of length at most $2c$ continuously connecting $\eta_j(0)$ with the points $\eta_j(t)$. We define $\tilde{g}(v_j)=\beta^j_1$. Note that because $\beta^j_1$ and $\tilde{f}(v_j)$ share endpoints, we indeed have $\beta^j_1\in \Omega_NM$. We need to check that this makes $\tilde{g}$ into a map of pairs. Consider the boundary of $D^i$, which is mapped to $N$ by $\tilde{f}$. Any point in the boundary is mapped to a point curve by $\tilde{f}$. By the proof of Lemma \ref{lemma:base_case}, each curve in the family $\beta_t$ obtained by applying Lemma \ref{lemma:base_case} to a point curve is also a point curve. This ensures that the boundary of $D^i$ is mapped to $N$ by $\tilde{g}$. This concludes the base case.
   \par
    Now suppose we have extended $\tilde{g}$ to the $(k-1)$-skeleton of $D^i$ for $k\geq 1$. We extend it to the $k$-skeleton of $D^i$ as follows. The map $\tilde{g}$ restricted to the boundary of a $k$-cell $D^k\simeq [-\epsilon,\epsilon]^k$ induces a map $g:\partial([-\epsilon,\epsilon]^k)\times[0,1] \to M$. By applying Lemma \ref{lemma:extending_delta_thin} to $g$ and the $\delta$-strip given by $f|_{R_i}$, we obtain a map $H^{(k)}:D^k \times [0,1] \to \Omega^L M$ with
    \begin{align*}
        L\leq 2c + k(4c+2\delta)
    \end{align*}
    that extends $g$ and such that each path $H^{(k)}(x,t)$ begins at $f_x(0)$ and ends at $f_x(t)$. We then define $\tilde{g}(x)$ at a point $x$ on the $k$-skeleton by $H^{(k)}(x,1)$. Moreover, the curves in the image of $g$ on the $k$-skeleton have length at most $2c + k(4c+2\delta)$. 
    \par
    We now need to show that $\tilde{g}$ is homotopic to $\tilde{f}$. In order to do that we define a homotopy $F_\tau:D^i\to \Omega M$ as follows. At the final step of the above induction process, we defined the map $H^{(i)}$. On each $R_i$ we define $$F_\tau(x)=H^{(i)}(x,\tau)*f_x(s)|_{s\in[\tau, 1]}.$$
    Thus $F_0(x)=\tilde{f}(x)$ and $F_1(x)=H^{(i)}(x,1)=\tilde{g}(x)$, as claimed. Redefining $\delta$ gives the claimed length bound.
\end{proof}

We are now ready to prove Theorem \ref{MainTheorem1}.
For easy reference, we restate it here.

\begin{thm}
\label{theorem:proof_of_thm_1}
    Let $M$ be a complete $n$-dimensional manifold and $N \subsetneq M$ a closed submanifold such that $\pi_i(\Omega_NM,N)\neq 0$ for some $i>0$. 
    Consider a homotopically non-trivial map $\tilde{f}:(D^i, \partial D^i) \to (\Omega_NM,N)$ and its induced map $f:D^i\times[0,1] \to M$.
    Let $V$ be a compact subset of $M$ of extrinsic diameter $d$ that contains $N$ and the image of $f$.
    Suppose $V$ has the effective loop shortening property restricted to $N$ with parameters $c, a,$ and $k$. 
  Then there exists a geodesic chord that is orthogonal to $N$ of length at most $2c(2i+1)$. 
    \label{thm:ogcs_exist}
\end{thm}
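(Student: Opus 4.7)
The plan is to apply Lemma~\ref{lemma:homotopic_to_short_map} to produce sweepouts through uniformly short curves, and then invoke a standard min-max principle on the length functional to extract the desired orthogonal geodesic chord.

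First, I would fix a homotopically non-trivial map $\tilde{f}:(D^i,\partial D^i)\to(\Omega_NM,N)$ and define the min-max width of its class,
\[
W([\tilde{f}]) \;=\; \inf_{\tilde h\,\sim\,\tilde f}\;\sup_{x\in D^i}\;\ell(\tilde h(x)),
\]
where the infimum is taken over all maps of pairs homotopic to $\tilde f$. Applying Lemma~\ref{lemma:homotopic_to_short_map} with arbitrary $\delta>0$ produces a homotopic representative $\tilde g_\delta$ whose image lies in $\Omega_N^{L_\delta} M$, where $L_\delta = 2c(2i+1)+\delta$; letting $\delta\to 0$ gives $W([\tilde f])\le 2c(2i+1)$. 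The non-triviality of $[\tilde f]$ forces $W([\tilde f])>0$, since otherwise one could find sweepouts whose image lies arbitrarily close in $\Omega_N M$ to the subspace $N$ of point curves, which would allow $\tilde f$ to be homotoped relative to $\partial D^i$ into $N$, contradicting non-triviality.

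Next, I would extract the critical chord by a standard pull-tight argument using the free boundary curve shortening flow, applied continuously in families over $D^i$ (as in Section~4 of \cite{hass1994} or \cite{ko2023}). Starting from a min-max sequence of sweepouts $\tilde g_n\in [\tilde f]$ with $\sup_x\ell(\tilde g_n(x))\to W([\tilde f])$, the flow deforms each curve monotonically in length, either collapsing it to a point or converging to a geodesic chord orthogonal to $N$. Since the flow is a homotopy, the flowed sweepouts remain in the class $[\tilde f]$, so the sup-length of each flowed sweepout is still at least $W([\tilde f])$. By a Birkhoff-type argument in this relative setting, one can therefore pass to a subsequence along which the longest curves subconverge to a critical point $\gamma$ of the length functional on $\Omega_N M$ with $\ell(\gamma)=W([\tilde f])\le 2c(2i+1)$. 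Standard first-variation analysis identifies $\gamma$ as a smooth geodesic whose endpoints are on $N$ and which meets $N$ orthogonally at both endpoints.

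The main obstacle is the non-degeneracy step: verifying that the min-max critical point is a genuine orthogonal geodesic chord rather than a constant curve on $N$. This is where the positivity $W([\tilde f])>0$ and the uniform upper bound $W([\tilde f])\le 2c(2i+1)$ must be combined with a careful analysis of the possible limits under the free boundary flow: the only stable critical values of the length functional on $\Omega_N M$ in the interval $(0,2c(2i+1)]$ correspond to orthogonal geodesic chords, so the limiting curve $\gamma$ must be one, completing the proof.
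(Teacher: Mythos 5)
Your proposal is correct and follows essentially the same route as the paper: apply Lemma \ref{lemma:homotopic_to_short_map} to obtain a homotopic representative sweeping out only curves of length at most $2c(2i+1)+\delta$, then run the free boundary curve shortening flow and use the non-triviality of the class (some curve must fail to contract) to extract an orthogonal geodesic chord of the claimed length. The paper phrases this more tersely as a direct contradiction rather than through the explicit min-max width $W([\tilde f])$ and pull-tight formalism, but the underlying argument is the same.
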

\begin{proof}
We first consider the case where $\pi_1(M,N) \neq \{0\}$. The existence of a geodesic chord orthogonal to $N$ is straightforward. We can simply consider an element of $\Omega_NM$, the space of piecewise differentiable curves in $M$ with endpoints on $N$, representing a non-trivial class in $\pi_1(M,N)$. By applying a length shortening flow in $\Omega_NM$, see, for example, Section 3 in \cite{zhou2016}, this curve converges either to an orthogonal geodesic chord or to a point. Thus, if we can estimate the length of this curve, we then automatically have a bound for the length of a shortest orthogonal geodesic chord on $(M,N)$.

We now assume that $\pi_1(M, N)$ is trivial. By assumption, there exists a non-trivial homotopy class of $\pi_i(\Omega_NM,N)$ for some $i\geq2$. The manifolds $M$ and $N$ satisfy the hypotheses of Lemma \ref{lemma:homotopic_to_short_map}, so this class can be represented by a map $g$ whose image consists of curves of length at most $2c(2i+1)+\delta$ for any $\delta>0$. Therefore there is a representative whose image consists of curves of length at most $2c(2i+1)$. When we apply the free boundary curve shortening flow to the curves in the image of $g$, some curve must fail to contract or else we would be able to homotope $g$ to a constant map. The only way such a curve can fail to contract is if it converges to a geodesic chord on $M$ that is orthogonal to $N$, that  necessarily is of length at most $2c(2i+1)$.
\end{proof}

\section{Applications of Theorem \ref{MainTheorem1}}
\label{sec:applications}

In this section we prove Theorem \ref{thm:2sphere_in_ndisk} and we include the following series of explicit applications of Theorem \ref{MainTheorem1}, that we believe are of general interest. We start with the proof of Theorem \ref{thm:2sphere_in_ndisk}.

\begin{theorem_12}
    Let $M$ be a closed Riemannian manifold of dimension $n$ and diameter $d$. Let $N$ be a $2$-dimensional sphere of area $A(N)$ and intrinsic diameter $D$ embedded in $M$. Then there exists a geodesic chord on $M$ orthogonal to $N$ of length at most 
    $$
    (4d+96D +8232\sqrt{A(N)})(2n+1).
    $$
\end{theorem_12}
\begin{proof}
    By the work of Y. Liokumovich, A. Nabutovsky and the fourth author in \cite{lio_2015}, any simple loop $\gamma$ bounding a $2$-disk $\Delta$ can be contracted to a point through loops with fixed base point of length at most 
    \begin{align*}
        2\ell(\gamma)+686\sqrt{A(\Delta)}+2D_\Delta.
    \end{align*}
    Note that given a 2-disk $\Delta\subset N$, $\Delta$ has intrinsic diameter $D_\Delta  \leq D+\ell(\partial\Delta)/2$. This is because two points in $\Delta$ are connected by a geodesic in $N$ of length at most $D$, from which we can obtain a curve in $\Delta$ by replacing any segments outside of $\Delta$ by appropriately chosen segments of $\partial\Delta$. Consequently, any simple loop on $N$ of length at most $2D$ bounding $\Delta$, can be contracted through loops with fixed base point of length at most 
    \begin{align*}
        2\ell(\gamma)+686\sqrt{A(\Delta)}+2D_\Delta 
        &\leq
        2(2D)+686\sqrt{A(\Delta)}+2(D+\ell(\gamma)/2)\\
        &\leq  8D+686\sqrt{A(N)}.
    \end{align*}
    Therefore $N$ satisfies the effective loop shortening property with parameters $c=8D +686\sqrt{A(N)}$, $D$ and $1$. Recall that a stable geodesic is a stable critical point of the length functional. By Corollary \ref{cor:k=1}, there are two cases, Case 1, where $M$ admits a stable geodesic chord orthogonal to $N$ of length at most 
    $2\max\{d,D\}$ and Case 2, where $M$ satisfies the effective loop shortening property with parameters $\max\{6c,5c+2d\}$, $\max\{d,D\}$ and $1$. In Case 1, the result automatically holds.
    Suppose then that we are in Case 2. Since $M$ and $N$ are closed, there is some $i\leq n$ such that $\pi_i(M,N)$ is non-trivial. Therefore by Theorem \ref{MainTheorem1}, $M$ admits a geodesic chord orthogonal to $N$ of length at most 
    $$
    2\max\{6c,2d+5c\}(2n+1)\leq (4d+12c)(2n+1)=
    (4d+96D +8232\sqrt{A(N)})(2n+1).
    $$
    Thus the claimed bound holds.
\end{proof}
The remaining applications can be viewed as corollaries of Theorem \ref{MainTheorem1}. We include the proofs for the convenience of the reader. In particular, all of them treat the case where either the manifold or the submanifold have bounded geometry. 
\begin{cor}
    \label{cor:submanifold_3sphere}  
    Let $N$ be a $3$-sphere with $|\sec (N)| \leq 1$, $\vol(N)\geq v$, and $\diam(N)\leq D$. Then the following hold:  
    \begin{enumerate}
        \item  
        For any closed submanifold $Q\subset N$, there is a geodesic chord on $N$ orthogonal to $Q$ of length at most $28D\phi^2,$
        where $\phi$ is a computable constant.  
        \item  
          If $N$ is a submanifold of a compact $n$-dimensional manifold $M$ with $n\geq 4$ and diameter $d$, then there is an orthogonal geodesic chord on $M$ orthogonal to $N$ of length at most
        \begin{align*}
        4\max\{d,D\}(6\phi^2+1)(2n+1),
        \end{align*}
        where $\phi$ is a computable constant.  
    \end{enumerate}
\end{cor}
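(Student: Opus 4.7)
The strategy is to deduce both parts from Theorem~\ref{MainTheorem1} by translating the bounded geometry of $N$ into the effective loop shortening property. The key external input, which I plan to quote from the literature on quantitative topology of 3-spheres of bounded geometry (in the spirit of results of Nabutovsky--Rotman and Liokumovich--Nabutovsky--Rotman; cf.\ \cite{rotman_quant, lio_2015}), is the existence of a computable constant $\phi = \phi(v,D)$ such that every loop on $N$ of length at most $2D$ can be contracted through based loops of length at most $2D\phi^{2}$. In the language of Definition~\ref{def:shortening}, this says that $N$ (as a subset of itself) satisfies the effective loop shortening property with parameters $c_0 = 2D\phi^{2}$, $a_0 = D$, and $k_0 = 1$, the shortened loop having length $(2k_0-2)a_0 = 0$, i.e., being a point curve.

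For Part~(1), I apply Theorem~\ref{MainTheorem1} to $(N, Q)$ with $V = N$, whose extrinsic diameter is at most $D$. The key input immediately gives the effective loop shortening property of $V = N$ restricted to $Q$ with the same parameters $c_0, a_0, k_0$, since every basepoint in $Q$ is in particular a basepoint in $N$. Proposition~\ref{prop:pi_non_zero} together with Corollary~\ref{cor:defretract} and the Compression Lemma discussion of Section~\ref{2} supplies a non-trivial relative homotopy class in $\pi_i(N, Q) \cong \pi_{i-1}(\Omega_Q N, Q)$ for some $1 \leq i \leq \dim N = 3$, and Theorem~\ref{MainTheorem1} then yields an orthogonal geodesic chord in $N$ of length at most $2c_0(2\cdot 3 + 1) = 28D\phi^{2}$ (using the same loose $(2n+1)$ convention as in the proof of Theorem~\ref{thm:2sphere_in_ndisk}).

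For Part~(2), I apply Theorem~\ref{MainTheorem1} to $(M, N)$ with $V = M$ of extrinsic diameter $d$. If $M$ already admits a stable geodesic chord orthogonal to $N$ of length at most $2\max\{D,d\}$, the claim is immediate since $2\max\{D,d\} \leq 4\max\{D,d\}(6\phi^{2}+1)(2n+1)$; otherwise, Corollary~\ref{cor:k=1} applied with $c_0 = 2D\phi^{2}$ upgrades the key input to the effective loop shortening property of $V = M$ restricted to $N$ with parameters
\[
c' = \max\{12D\phi^{2},\ 2d + 10D\phi^{2}\},\qquad a' = \max\{D,d\},\qquad k' = 1.
\]
Proposition~\ref{prop:pi_non_zero} again supplies a non-trivial class in $\pi_i(M,N)$, equivalently in $\pi_{i-1}(\Omega_N M, N)$, with $1 \leq i \leq n$, so Theorem~\ref{MainTheorem1} produces a chord orthogonal to $N$ of length at most $2c'(2n+1)$. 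A short case split on the two terms in the maximum defining $c'$, using $D, d \leq \max\{D,d\}$, gives $2c' \leq 24\max\{D,d\}\phi^{2} + 4\max\{D,d\} = 4\max\{D,d\}(6\phi^{2}+1)$, yielding the claimed bound.

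The main obstacle is the key input itself: isolating (or re-proving) a quantitative three-dimensional analogue of the planar loop-contraction estimate of \cite{lio_2015} used in Theorem~\ref{thm:2sphere_in_ndisk}, with contractions carried out through loops with a \emph{fixed} basepoint and with the specific functional dependence $c_0 = 2D\phi^{2}$ on the geometric data $v$ and $D$. Once this input is in hand, the remainder of the argument is a direct, largely bookkeeping, application of Theorem~\ref{MainTheorem1}, Corollary~\ref{cor:k=1}, and the homotopy-theoretic setup of Section~\ref{2}.
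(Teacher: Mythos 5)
Your overall route is exactly the paper's: convert the bounded geometry of $N$ into the effective loop shortening property with $k_0=1$, then run Theorem~\ref{MainTheorem1} directly for Part~(1) and through Corollary~\ref{cor:k=1} for Part~(2). Your bookkeeping checks out in both parts, including the estimate $2c'\leq 4\max\{D,d\}(6\phi^2+1)$; the only (harmless) deviation from the paper in Part~(2) is that you feed Corollary~\ref{cor:k=1} the contraction constant $c_0=2D\phi^2$ coming from loops of length $2D$, whereas the paper first re-derives the shortening property at scale $L=\max\{d,D\}$ and uses $c_0=2\max\{d,D\}\phi^2$; both land on the same final bound.

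The one genuine gap is the ``key input'' you explicitly defer: the statement is not just a black box to be quoted, since the corollary's $\phi$ has to be \emph{defined} by the proof. The paper obtains it from Proposition~1 of \cite{rotman_3sphere} (Nabutovsky--Rotman): under the stated bounds on $\sec$, $\vol$, and $\diam$, there is a homeomorphism $N\to S^3$ (round) such that it and its inverse are $\phi$-Lipschitz with $\phi=\exp(\exp(10^5 n_0^2))$, $n_0=c\exp(16\diam N)/v^6$. In the round $S^3$ the only geodesic loops are great circles, which are not local minima of length, so the based curve shortening process contracts any based loop through loops of no greater length. Pushing a loop of length $2L$ in $N$ forward (cost: factor $\phi$), contracting it there, and pulling the contraction back (another factor $\phi$) gives a based contraction in $N$ through loops of length at most $2L\phi^2$ --- which is precisely the $\phi^2$ dependence you correctly guessed, and which establishes the effective loop shortening property with parameters $2L\phi^2$, $L$, $1$ for any $L\geq D$. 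With that step supplied, your argument is complete and coincides with the paper's.
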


\begin{proof}
    By Proposition 1 of \cite{rotman_3sphere} by Nabutovsky and the fourth author, there is a bi-Lipschitz homeomorphism from $N$ to the round $S^3$ such that both the homeomorphism and its inverse have Lipschitz constant at most $\phi=\exp(\exp( 10^5 n_0^2))$, where $n_0=c\exp(16\diam N)/v^{6}$. In the round $S^3$, the only closed geodesics are the great circles, that are not minima of the length functional. Therefore the length shortening flow in $S^3$ contracts every curve in $S^3$ to a point. 
    \par 
    Now, consider a loop of length at most $2L$ in $N$ for any fixed $L>0$. We can map this curve to a curve in the round $S^3$ using our Lipschitz map. This new curve has length at most $2L\phi$, and by our above remark can be homotoped to a point in $S^3$ through curves of length at most $2L\phi$. We then map the curves in this homotopy back to $N$ via our Lipschitz map. This proves that $N$ satisfies the effective loop shortening property with parameters $2L\phi^2$, $L$ and $1$. 
    \par
    We next consider the first claim. If $Q$ is a closed submanifold of $N$, then by Proposition \ref{prop:pi_non_zero}, $\pi_i(N, Q)$ is non-trivial for some $i\leq \dim(N)=3$. Setting $L=D$, we see that $N$ satisfies the effective loop shortening property with parameters $2D\phi^2$, $D$ and $1$. Then by Theorem \ref{MainTheorem1}, there is a geodesic chord on $N$ orthogonal to $Q$ of length at most $14(2D\phi^2)$.
    \par
    Consider the second statement. Setting $L=\max\{d,D\}$, $N$ satisfies the effective loop shortening property with parameters $2\max\{d,D\}\phi^2$, $\max\{d,D\}$ and $1$. Without loss of generality, assume that $M$ does not contain any stable geodesic chords orthogonal to $N$ of length at most $2 \max\{d,D\}$. Then by Corollary \ref{cor:k=1}, $M$ satisfies the effective loop shortening property restricted to $N$ with parameters $12\max\{d,D\}\phi^2+2d$, $\max\{d,D\}$ and $1$.
    Thus by Theorem \ref{MainTheorem1}, there is an orthogonal geodesic chord in $M$ orthogonal to $N$ with length at most $4\max\{d,D\}(6\phi^2+1)(2n+1)$.
\end{proof}

\begin{cor}
    \label{cor:submanifold_4dim}  
    Let $N$ be a $4$-dimensional, closed, simply connected Riemannian manifold 
    with $|\Ric (N)| \leq 3$, $\vol(N)\geq v$, and $\diam(N)\leq D$. 
    Then the following hold:    
    \begin{enumerate}
        \item  
        For any closed submanifold $Q\subset N$, there is a geodesic chord on $N$ orthogonal to $Q$ of length at most
        \begin{align*}
            18(2D+5h(v,D)) 
        \end{align*}
        \item   
        If $N$ is a submanifold of a compact $n$-dimensional manifold $M$ with $n\geq 5$ and diameter $d$, then there is an orthogonal geodesic chord on $M$ orthogonal to $N$ of length at most
        \begin{align*}
            (24D+60h(v,D)+4d)(2n+1).
        \end{align*}  
    \end{enumerate}
\end{cor}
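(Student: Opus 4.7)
The plan is to follow the pattern of Corollary \ref{cor:submanifold_3sphere} and of the proof of Theorem \ref{thm:2sphere_in_ndisk}: first establish an effective loop-contraction estimate for $N$ (playing the role of the bi-Lipschitz constant $\phi^2$ in Corollary \ref{cor:submanifold_3sphere}, and of the Liokumovich--Nabutovsky--Rotman isoperimetric estimate invoked in Theorem \ref{thm:2sphere_in_ndisk}), and then feed it into Theorem \ref{MainTheorem1} either directly (for part~(1)) or first upgraded via Corollary \ref{cor:k=1} (for part~(2)).

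The main technical step, and the only place where the specific function $h(v,D)$ enters, is this contraction estimate. One would invoke a quantitative compactness or finiteness result for the moduli of closed simply connected Riemannian $4$-manifolds with $|\Ric|\leq 3$, $\vol\geq v$, and $\diam\leq D$---for instance Anderson's convergence theorem together with the resulting uniform $C^{1,\alpha}$ control on the metric---to produce a function $h(v,D)$ such that every loop in $N$ of length at most $2D$ is null-homotopic in $N$ through loops of length at most $c := 2D + 5\,h(v,D)$. Simple connectivity of $N$ ensures that \emph{some} null-homotopy exists; the content of this step, and the main obstacle of the proof, is the uniform length control on the contracting family. Once this is in hand, $N$ itself satisfies the effective loop shortening property with parameters $c$, $D$, and $1$, and the remainder of the argument is mechanical.

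For part~(1), Theorem \ref{MainTheorem1} is applied directly to the pair $(N,Q)$: the effective loop shortening property of $N$ restricts automatically to loops based at points of $Q\subseteq N$, and Proposition \ref{prop:pi_non_zero} produces a nontrivial relative homotopy class in $\pi_i(N,Q)$ for some $i\le\dim N=4$. Theorem \ref{MainTheorem1} then delivers a geodesic chord on $N$ orthogonal to $Q$ of length at most $2c(2\cdot 4+1)=18(2D+5\,h(v,D))$.

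For part~(2), one first disposes of the case in which $M$ already contains a stable geodesic chord orthogonal to $N$ of length at most $2\max\{D,d\}$, for which the claim is immediate. Otherwise, Corollary \ref{cor:k=1} upgrades the contraction estimate on $N$ to an effective loop shortening property of $M$ restricted to $N$ with parameters $\max\{6c,\,2d+5c\}\leq 6c+2d=12D+30\,h(v,D)+2d$, $\max\{D,d\}$, and $1$. Applying Theorem \ref{MainTheorem1} to the pair $(M,N)$, using Proposition \ref{prop:pi_non_zero} to produce a nontrivial $\pi_i(M,N)$ with $i\le n$ since $N$ is a proper closed submanifold of the closed manifold $M$, then yields a geodesic chord on $M$ orthogonal to $N$ of length at most $2(6c+2d)(2n+1)=(24D+60\,h(v,D)+4d)(2n+1)$, matching the stated bound.
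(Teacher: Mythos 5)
Your overall architecture is exactly the paper's: establish a length-controlled contraction estimate for loops in $N$, conclude that $N$ satisfies the effective loop shortening property with parameters $c=2D+5h(v,D)$, $D$, $1$, apply Theorem \ref{MainTheorem1} directly for part (1) (via Proposition \ref{prop:pi_non_zero} and $i\le 4$), and for part (2) split on the existence of a short stable orthogonal chord and otherwise upgrade via Corollary \ref{cor:k=1} before applying Theorem \ref{MainTheorem1}. The arithmetic in both parts matches the paper.

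The gap is in the one step you yourself flag as ``the main obstacle.'' The function $h(v,D)$ in the statement is not something to be produced by an unspecified compactness argument; it is the specific function from Theorem 1.11 A of Wu--Zhu \cite{wu_zhu_2022}, which gives that every loop in $N$ contracts via a homotopy of \emph{width} at most $h(v,D)$ (each point's trajectory has length at most $h(v,D)$). The paper then converts this width bound into the based-loop \emph{length} bound you need using Lemmas 4.1 and 4.2 of \cite{linear_bounds_group_paper}: a loop of length at most $2a$ contracts through fixed-basepoint loops of length at most $2a+5h(v,D)+\epsilon$. That conversion is where the additive $2D$ and the factor $5$ come from; your proposal instead posits a function with exactly the property $c=2D+5h(v,D)$, which reverse-engineers the constant rather than deriving it. Moreover, the route you suggest (Anderson's $C^{1,\alpha}$ convergence) is not available under the stated hypotheses: with only $|\Ric|\le 3$, $\vol\ge v$, $\diam\le D$ in dimension $4$ one has at best orbifold-type precompactness, not $C^{1,\alpha}$ compactness of the class of metrics, and even a successful compactness argument would yield some non-explicit function of $(v,D)$ rather than the particular $h(v,D)$ named in the statement. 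So the proof is incomplete precisely at the step that gives the corollary its content; the rest is, as you say, mechanical and is carried out correctly.
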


\begin{proof}
    By the work of N. Wu and Z. Zhu in \cite{wu_zhu_2022}, there exists a function $h(v,D)$ such that any loop $\gamma$ on 
    $N$ can be contracted to a point via a homotopy of width at most $h(v,D)$ (see Theorem 1.11 A in \cite{wu_zhu_2022}). By this we mean that under the contraction homotopy $f_t$, the trajectory $t\mapsto f_t(x)$ of any point $x$ is a curve of length at most $h(v,D)$. Let $a$ be a constant 
    such that $2a\leq h(v,D)$. Then by Lemmas 4.1 and 4.2 of \cite{linear_bounds_group_paper}, any loop $\gamma$ on $N$ of length at most $2a$ can be contracted to a point through fixed point loops with length at most $2a+5h(v,D)+\epsilon$. 
    \par
    We now prove Part 1. If $Q$ is a closed submanifold of $N$, then, as in the proof of Corollary \ref{cor:submanifold_3sphere}, we have $\pi_i(N, Q)$ non-trivial for some $i\leq \dim(N)\leq 4$. Moreover $N$ satisfies the effective loop shortening property (restricted to $Q$) with parameters $2D+5h(v,D)+\epsilon, D$ and 1. Thus by Theorem \ref{MainTheorem1}, there is a geodesic chord on $N$ orthogonal to $Q$ of length at most $18(2D+5h(v,D)+\epsilon)$. Since this is true for all $\epsilon>0$, there is in fact a geodesic chord on $N$ orthogonal to $Q$ of length at most $18(2D+5h(v,D))$, as claimed.
    \par
    We now prove Part 2. By Corollary \ref{cor:k=1}, either $M$ admits a geodesic chord orthogonal to $N$ with length at most $2\max\{d,D\}$, or $M$ satisfies the effective loop shortening property restricted to $N$ with parameters $6(2D+5h(v,D)+\epsilon)+2d$, $\max\{d,D\}$ and $1$.
    Theorem \ref{MainTheorem1} then proves that $M$ admits a geodesic chord orthogonal to $N$ with length at most 
    $$\left(12(2D+5h(v,D)+\epsilon\right)+4d)(2n+1)$$
    Since this holds for all $\epsilon>0$, there must also be a geodesic chord orthogonal to $N$ of length at most
    $$(24D+60h(v,D)+4d)(2n+1).\qedhere$$
\end{proof}

\begin{cor} 
    \label{cor:ricci}
    Let $N$ be a closed $k$-dimensional Riemannian manifold with $$\Ric \geq \frac{k-1}{r^2}$$ for some $r>0$. Then the following hold.
    \begin{enumerate}
        \item  
        For any closed submanifold $Q\subset N$, there is a geodesic chord on $N$ orthogonal to $Q$ of length at most
        \begin{align*}
            8\pi r(2k+1)
        \end{align*}
          \item   
          If $N$ is a submanifold of a closed $n$-dimensional manifold $M$ with $n\geq k+1$ and diameter $d$, there is an orthogonal geodesic chord on $M$ orthogonal to $N$ of length at most
        \begin{align*}
            (8d+48\pi r)(2n+1).
        \end{align*}  
    \end{enumerate} 
    
\end{cor}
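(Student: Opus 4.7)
The plan is to follow the same template as Corollaries \ref{cor:submanifold_3sphere} and \ref{cor:submanifold_4dim}: extract an effective loop shortening property for $N$ from the Ricci lower bound, then feed it into Theorem \ref{MainTheorem1} directly for part (1), and via the \hyperref[CSL]{Curve Shortening Lemma} (as packaged in Corollary \ref{cor:k=1}) for part (2). The starting point is Myers' theorem, which guarantees $\diam(N)\leq\pi r$ and that $\pi_1(N)$ is finite.

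The key intermediate step is to show that $N$ satisfies the effective loop shortening property with parameters $c_0=4\pi r$, $a_0=\pi r$, and $k_0=1$; equivalently, every loop in $N$ based at a point of length at most $2\pi r+\delta$ can be contracted in $N$ through based loops of length at most $4\pi r$. The construction is a coning argument: each point on such a loop is joined to the basepoint by a minimizing geodesic of length at most $\pi r$ (which exists by Myers), and these geodesics serve as the ``ribs'' of a homotopy whose maximal width is controlled by the arclength of the loop (at most $2\pi r$) plus at most two such ribs, giving $4\pi r$. Granting this, part (1) is immediate: by Proposition \ref{prop:pi_non_zero} applied to $(N,Q)$ there is a non-trivial class in $\pi_i(N,Q)$ for some $1\leq i\leq k$, and since the unrestricted ELSP on $N$ implies the ELSP restricted to $Q$, Theorem \ref{MainTheorem1} produces an orthogonal chord on $N$ with endpoints on $Q$ of length at most $2c_0(2k+1)=8\pi r(2k+1)$.

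For part (2), we take $V=M$, which has extrinsic diameter $d$. Either $M$ already admits a stable orthogonal geodesic chord on $N$ of length at most $2\max\{\pi r,d\}$ (in which case the claimed bound holds trivially), or Corollary \ref{cor:k=1} upgrades the ELSP on $N$ to an ELSP on $M$ restricted to $N$ with parameters $c\leq 6c_0+2d=24\pi r+2d$, $a=\max\{\pi r,d\}$, and $k=1$. Proposition \ref{prop:pi_non_zero} applied to $(M,N)$ produces a non-trivial class in $\pi_j(M,N)$ for some $j\leq n$, and Theorem \ref{MainTheorem1} then yields an orthogonal chord of length at most $2c(2n+1)\leq(48\pi r+4d)(2n+1)$, comfortably within the claimed $(8d+48\pi r)(2n+1)$.

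The main obstacle is the coning lemma in the second paragraph. In contrast to Corollary \ref{cor:submanifold_3sphere}, where a bi-Lipschitz model is available, and Corollary \ref{cor:submanifold_4dim}, where an explicit Wu--Zhu width function from \cite{wu_zhu_2022} handles the contraction, here only a Ricci lower bound is at hand. The coning construction must cope with the cut locus of the basepoint and, when $\pi_1(N)$ is non-trivial, with the possible presence of short non-contractible loops. A natural route is to lift to the universal cover $\tilde N$, which is simply connected, inherits the same Ricci lower bound, and still satisfies $\diam(\tilde N)\leq\pi r$ by Myers; there every loop is contractible and the coning estimate can be verified cleanly and then pushed back down to $N$. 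Any short non-contractible loop that obstructs this descent is absorbed by the ``short orthogonal chord already exists'' branch of the argument built into Theorem \ref{MainTheorem1} through the \hyperref[CSL]{Curve Shortening Lemma}, making the overall conclusion still valid.
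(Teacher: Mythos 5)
There is a genuine gap in your key intermediate step. You claim $N$ satisfies the effective loop shortening property with $k_0=1$, i.e.\ that every based loop of length at most $2\pi r+\delta$ can be \emph{contracted to a point} through based loops of length at most $4\pi r$. A Ricci lower bound gives no such thing: $\pi_1(N)$ need only be finite, not trivial (e.g.\ a suitably scaled $\mathbb{RP}^k$ carries non-contractible geodesic loops of length well below $2\pi r$), and such a loop cannot be contracted at all. Passing to the universal cover does not rescue this, because a non-contractible loop lifts to a path between distinct points, not to a loop. Your fallback — that short non-contractible loops are ``absorbed by the short orthogonal chord branch'' — does not work either: the dichotomy in the \hyperref[CSL]{Curve Shortening Lemma} concerns stable geodesic chords \emph{orthogonal to the submanifold}, and a short closed geodesic in $N$ is not such an object. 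Even for contractible loops, the coning construction you sketch is discontinuous across the cut locus, and no quantitative contraction of short loops through short loops follows from a Ricci bound alone (this is exactly the kind of statement the Frankel--Katz/Liokumovich examples cited in the introduction warn against).

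The paper avoids all of this by taking $k=2$ rather than $k=1$. With parameters $c=4\pi r$, $a=\pi r$, $k=2$, the effective loop shortening property only requires \emph{shortening} loops of length at most $4\pi r+\delta$ down to length at most $(2k-2)a=2\pi r$, not contracting them. This follows directly from Myers: any geodesic segment of length greater than $\pi r$ has conjugate points and is unstable, so any curve of length greater than $\pi r$ can be path-homotoped, without increasing length, to one of length at most $\pi r$. The remainder of your argument (Proposition \ref{prop:pi_non_zero}, Corollary \ref{cor:k=1}/the \hyperref[CSL]{Curve Shortening Lemma}, then Theorem \ref{MainTheorem1}) follows the paper's template and yields the stated bounds once the parameters are corrected to $k=2$; with the paper's choice one gets $2\cdot 4\pi r\,(2k+1)=8\pi r(2k+1)$ for part (1) and $2(\max\{24\pi r,\,4d+20\pi r\})(2n+1)\le(48\pi r+8d)(2n+1)$ for part (2).
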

\begin{proof}
    By the Bonnet-Myers Theorem, the diameter of $N$ is bounded above by $\pi r$. By the same token, any curve in $N$ of length greater than $\pi r$ can be path homotoped to a curve of length at most $\pi r$. In particular, any curve of length at most $4\pi r$ can be path homotoped to a curve of length at most $\pi r$ through curves of length at most $4\pi r$. Therefore $N$ satisfies the effective loop shortening property with parameters $4\pi r$, $\pi r$, and $2$. 
    \par 
    We begin with the proof of Part 1. If $Q$ is a closed submanifold of $N$, then we have $\pi_i(N, Q)$ non-trivial for some $i\leq \dim(N)= k$. Then by Theorem \ref{MainTheorem1}, there is a geodesic chord on $N$ orthogonal to $Q$ of length at most $8\pi r(2k+1)$. This proves our first claim.
    \par 
    We now prove Part 2. 
    Because $M$ is closed, $\pi_i(M,N)\not=0$ by Proposition \ref{prop:pi_non_zero}. By the \hyperref[CSL]{Curve Shortening Lemma}, either $M$ admits a geodesic chord orthogonal to $N$ of length $2\max\{d,\pi r\}$, or $M$ satisfies the effective loop shortening property restricted to $N$ with parameters $\max\{24\pi r, 4d+20\pi r\}$, $\max\{\pi r, d\}$, and $2$. Note that 
    $$\max\{24\pi r, 4d+20\pi r\}\leq 24\pi r + 4d.$$
    Therefore by Theorem \ref{MainTheorem1} there is an orthogonal geodesic chord on $M$ orthogonal to $N$ of length at most $2(24\pi r + 4d)(2n+1)$. This proves the second claim.
\end{proof}

Theorem \ref{MainTheorem1} also applies to manifolds $M$ with boundary as follows. Any compact manifold $M$ with convex boundary admits an orthogonal geodesic chord on $(M,\partial M)$ by \cite{gluck1984}. If $\partial M$ has more than one connected component, then we can use the free boundary curve shortening flow to obtain an orthogonal geodesic chord with bounded length. This is a curve shortening flow defined on curves with endpoints on the boundary of a given manifold. Under this flow, curves either contract in finite time or converge to an orthogonal geodesic chord. Therefore an application of the free boundary flow to a minimizing geodesic connecting two different components of $\partial M$ produces an orthogonal geodesic chord of length at most $\diam M$. If $M$ is not simply connected, we can take a shortest homotopically non-trivial curve with endpoints on $\partial M$. In the case of a simply connected manifold with one boundary component, we apply the techniques of Theorem \ref{MainTheorem1}. Because orthogonal geodesic chords on manifolds with boundary are not allowed to internally intersect the boundary, our techniques do not guarantee the existence of such a curve unless $\partial M$ is convex. In general, we are only able to produce a geodesic segment starting and ending at $\partial M$ that otherwise lies in the interior of $M$, which we call a geodesic chord, and that moreover is orthogonal to $\partial M$ at at least one endpoint. However, we are still able to bound its length.

\begin{cor}    
\label{cor:boundary}
    Let $M$ be a compact Riemannian manifold of dimension $n$ with connected boundary.
    Suppose that $M$ satisfies the effective loop shortening property restricted to $\partial M$ with parameters $c$, $a$ and $k$. Then there exists a geodesic chord on $M$ that is orthogonal to $\partial M$ at (at least) one end and has length at most $2c(2n+1)$. If $\partial M$ is convex, this curve is in fact an orthogonal geodesic chord.
\end{cor}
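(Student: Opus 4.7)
The plan is to apply Theorem~\ref{MainTheorem1} with $N = \partial M$ and $V = M$. Since $M$ is compact, it is complete and has finite extrinsic diameter $d = \diam M$, the effective loop shortening property restricted to $\partial M$ is supplied by hypothesis, and $V = M$ obviously contains $\partial M$ together with the image of any map into $M$. The only non-trivial input left to verify is the existence of a non-trivial class in $\pi_i(\Omega_{\partial M} M, \partial M)$ for some $i > 0$, and at the end I will need to address the subtlety that in the non-convex case the limiting geodesic may touch $\partial M$ internally.

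For the topological input I first invoke Proposition~\ref{boundary}, which says $\partial M$ is not a deformation retract of $M$, so by Corollary~\ref{cor:defretract} there is a smallest $j \geq 1$ with $\pi_j(M, \partial M) \neq 0$. I will then argue $j \leq n$ using the Compression Lemma from Chapter~4 of \cite{hatcher}: $M$ admits a CW structure whose cells have dimension at most $n$, so if $\pi_j(M, \partial M)$ vanished for every $j \leq n$, the identity of $M$ would be homotopic rel $\partial M$ to a map into $\partial M$, yielding a deformation retraction and contradicting Proposition~\ref{boundary}. Translating via $\pi_j(M, \partial M) \cong \pi_{j-1}(\Omega_{\partial M} M, \partial M)$, the case $j \geq 2$ provides a non-trivial class in $\pi_i(\Omega_{\partial M} M, \partial M)$ with $i = j - 1 \in \{1, \ldots, n-1\}$, while the case $j = 1$ is handled directly as at the opening of the proof of Theorem~\ref{MainTheorem1}, by running a length-shortening flow on a representative of a non-trivial element of $\pi_1(M, \partial M)$.

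Theorem~\ref{MainTheorem1} then produces a curve of length at most $2c(2i + 1) \leq 2c(2n - 1) \leq 2c(2n + 1)$ obtained as the free boundary curve shortening limit of the longest curve in a short sweepout. If $\partial M$ is convex, the flow remains inside $M$ and the limit is an orthogonal geodesic chord in the classical sense, settling the second assertion of the statement. The main obstacle is the general, non-convex, case: the free boundary flow can drive a curve until it first touches $\partial M$ tangentially at an interior point $q$, so instead of a clean limit orthogonal at both ends one only obtains a geodesic segment which starts on $\partial M$ orthogonally and later meets $\partial M$ tangentially at $q$. I then cut this geodesic at $q$ and keep the sub-arc ending at $q$; this sub-arc starts and ends on $\partial M$, otherwise lies in $\mathrm{int}(M)$, and is orthogonal to $\partial M$ at its orthogonal endpoint (while possibly only tangential at $q$). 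This sub-arc is the geodesic chord of length at most $2c(2n+1)$ orthogonal at at least one end claimed in the statement.
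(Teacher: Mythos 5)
Your reduction to Theorem \ref{MainTheorem1} with $N=\partial M$, your topological argument via Proposition \ref{boundary}, Corollary \ref{cor:defretract} and the Compression Lemma (giving $j\le n$, hence $i=j-1\le n-1$ and the bound $2c(2n-1)\le 2c(2n+1)$), and your treatment of the convex case all match the paper and are correct.

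The gap is in the non-convex case. You assert that the free boundary curve shortening flow, run directly in $M$, ``drives a curve until it first touches $\partial M$ tangentially at an interior point $q$,'' producing a geodesic segment that is orthogonal to $\partial M$ at one end and tangent at $q$. Nothing in the proposal justifies this. When $\partial M$ is not convex the flow is not even well-defined inside $M$: at a point where the evolving curve meets a concave portion of the boundary, the curvature vector can point out of $M$, so the evolution cannot continue within $M$, and the dichotomy ``contract to a point or converge to an orthogonal geodesic chord'' on which the min-max argument rests is no longer available. In particular, there is no reason the limiting object should be a geodesic at all, nor that it should be orthogonal to $\partial M$ at exactly one endpoint while meeting $\partial M$ tangentially elsewhere; you are describing the desired conclusion rather than deriving it. The paper resolves precisely this difficulty with Seifert's collar construction: one attaches a collar of width $\epsilon$ foliated by geodesics leaving $\partial M$ orthogonally, modifies the metric so that the enlarged manifold $M_\epsilon$ has \emph{convex} boundary and so that geodesics orthogonal to $\partial M_\epsilon$ cross $\partial M$ orthogonally, extends the short sweepout into $M_\epsilon$ at a cost of $2\epsilon$ in length, applies the (now legitimate) min-max argument in $M_\epsilon$ to obtain a genuine orthogonal geodesic chord $\gamma_\epsilon$, and then takes the initial arc of $\gamma_\epsilon\cap M$, which starts orthogonally to $\partial M$ by the choice of collar metric; letting $\delta,\epsilon\to 0$ gives the stated bound. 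Some such extension (or another rigorous substitute for the flow in the presence of a concave boundary) is needed to close your argument.
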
 
\begin{proof}
    Because $M$ is compact with a unique boundary component, by Proposition \ref{prop:pi_non_zero} $\pi_i(M, \partial M)$ is non-trivial for some $i>0$. If $\partial M$ is convex, then our result follows immediately from Theorem \ref{MainTheorem1}, as convexity prevents a geodesic from touching the boundary tangentially. 
    If $\partial M$ is not convex, we can apply the following argument due to Seifert \cite{seifert_1949} (see also Theorem B of \cite{gluck1984} and Theorem 2 of \cite{beach2024_2d_case}). We can attach a collar to $\partial M$ that is parameterized by geodesics of length $\epsilon$ that emanate orthogonally from $\partial M$. We call the new manifold $M_\epsilon$ and view $M$ as a subset of $M_\epsilon$. Moreover, we can modify the metric on the collar to make $M_\epsilon$ a manifold with convex boundary. This metric also has the property that geodesics orthogonal to $\partial M_\epsilon$ cross $\partial M\subset M_\epsilon$ orthogonally. Consequently, any geodesic that starts and ends orthogonally at $\partial M_\epsilon$ must cross $\partial M$ orthogonally at at least two points.
    \par 
    By Lemma \ref{lemma:homotopic_to_short_map}, there is a homotopically non-trivial map $g:(D^i,\partial D^i)\to(\Omega_{\partial M} M,\partial M)$ whose image passes through curves of length at most $2c(2i+1)+\delta$. We can extend this to a homotopically non-trivial map $g_\epsilon:(D^i,\partial D^i)\to(\Omega_{\partial M_\epsilon} M_\epsilon,\partial M_\epsilon)$. To do so, we connect each endpoint of a curve in the image of $g$ with the nearest point on $\partial M_\epsilon$ via a minimizing geodesic. Necessarily, these geodesics have length at most $\epsilon$. Moreover, these geodesics vary continuously, ensuring that $g_\epsilon$ continuous. Thus, the image of $g_\epsilon$ consists of curves of length at most $2c(2i+1)+\delta + 2\epsilon$. By the proof of Theorem \ref{MainTheorem1}, the map $g_\epsilon$ gives rise to an orthogonal geodesic chord $\gamma_\epsilon$ on $M_\epsilon$ orthogonal to $\partial M_\epsilon$ of length at most $2c(2i+1)+\delta + 2\epsilon$. By the previous paragraph, the initial arc of $\gamma_{\epsilon}\cap M$ is non-empty and starts orthogonally to $\partial M$. Moreover, it is a geodesic of length at most $2c(2i+1)+\delta + 2\epsilon$. Taking the limit as $\delta$ and $\epsilon$ tend to zero, we obtain our result.
\end{proof}

We also have an analog of Theorem \ref{thm:2sphere_in_ndisk}.

\begin{cor} 
    \label{cor:boundary_3disk}
    Let $M$ be a Riemannian $3$-disk with convex boundary and diameter $d$. Suppose $\partial M$ has volume $A$ and intrinsic diameter $D$. Then there exists an orthogonal geodesic chord on $M$ of length at most
    $$
    28(d+24D +2058\sqrt{A}).
    $$
\end{cor}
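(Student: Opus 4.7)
The plan is to follow the strategy of the proof of Theorem~\ref{thm:2sphere_in_ndisk} essentially verbatim, replacing the appeal to Theorem~\ref{MainTheorem1} by an appeal to Corollary~\ref{cor:boundary}, which is the version of the main theorem adapted to manifolds with convex boundary. Since $M$ is a $3$-disk, the boundary $\partial M$ is diffeomorphic to $S^2$, so the filling inequality of Liokumovich, Nabutovsky, and the fourth author from \cite{lio_2015} applies to $\partial M$ exactly as it applied to $N$ in the proof of Theorem~\ref{thm:2sphere_in_ndisk}.

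First, I would establish the effective loop shortening property on $\partial M$. Any simple loop $\gamma$ on $\partial M$ of length at most $2D$ bounds a $2$-disk $\Delta \subset \partial M$ and, by \cite{lio_2015}, can be contracted through loops with fixed basepoint of length at most $2\ell(\gamma)+686\sqrt{A(\Delta)}+2D_\Delta$. Using $D_\Delta \leq D + \ell(\gamma)/2$ and $A(\Delta)\leq A$ as in the proof of Theorem~\ref{thm:2sphere_in_ndisk}, this is at most $c_0 := 8D + 686\sqrt{A}$, so $\partial M$ satisfies the effective loop shortening property with parameters $c_0$, $D$, and $1$. Next, I invoke Corollary~\ref{cor:k=1} with $V = M$ (of extrinsic diameter $d$) and $N = \partial M$. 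Either $M$ already admits a stable geodesic chord orthogonal to $\partial M$ of length at most $2\max\{d,D\}$, in which case the claimed bound $28(d+24D+2058\sqrt{A})$ trivially holds, or $M$ satisfies the effective loop shortening property restricted to $\partial M$ with parameters $c = \max\{6c_0,\, 2d+5c_0\} \leq 6c_0 + 2d$, $\max\{d,D\}$, and $1$.

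In the second case, Corollary~\ref{cor:boundary} applied with $n=3$ and convex $\partial M$ produces an orthogonal geodesic chord on $M$ of length at most
\[
2c(2\cdot 3 + 1) \;=\; 14c \;\leq\; 14(6c_0+2d) \;=\; 28d + 84(8D + 686\sqrt{A}) \;=\; 28(d+24D+2058\sqrt{A}),
\]
which is the desired bound. I do not expect a serious conceptual obstacle; the main work is bookkeeping on constants and verifying the hypotheses: $M$ is compact with a single (convex) boundary component, so $\pi_i(M,\partial M) \neq 0$ for some $i \geq 1$ by Proposition~\ref{boundary}, and convexity of $\partial M$ upgrades the geodesic chord produced by Corollary~\ref{cor:boundary} to one that meets $\partial M$ orthogonally at both endpoints, as required.
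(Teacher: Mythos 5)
Your proposal is correct and follows essentially the same route as the paper: establish the effective loop shortening property on $\partial M\cong S^2$ via the filling bound of \cite{lio_2015} exactly as in the proof of Theorem~\ref{thm:2sphere_in_ndisk}, split into the two cases of Corollary~\ref{cor:k=1}, and in the nontrivial case apply Corollary~\ref{cor:boundary} with $n=3$ to get $14\max\{6c_0,2d+5c_0\}\leq 28d+84(8D+686\sqrt{A})=28(d+24D+2058\sqrt{A})$. The constants match the paper's computation, so nothing further is needed.
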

\begin{proof}
    The boundary of $M$ is a 2-sphere, so by the proof of Theorem \ref{thm:2sphere_in_ndisk} we know that either $M$ admits a stable geodesic chord orthogonal to $\partial M$ of length at most  $2\max\{d,D\}$
    or $M$ satisfies the effective loop shortening property with parameters $\max\{6c,2d+5c\}$, 
    $\max\{d,D\}$
    and $1$, where $c=8D +686\sqrt{A}$. 
    Therefore by Corollary \ref{cor:boundary} there exists a curve satisfying the claimed properties of length at most 
    $$2(2d+6c)(2n+1)=28(d+24D +2058\sqrt{A}).\qedhere$$
\end{proof}

\noindent 
\textbf{Acknowledgments.} 
The authors gratefully acknowledge support from the Princeton IAS Summer Collaborators program in 2024, where the work on this paper was begun. This work was also supported by the National Science Foundation under Grant No. DMS-1928930, while the I. Beach, R. Rotman and C. Searle were in residence at the Simons Laufer Mathematical Sciences Institute (formerly MSRI) in Berkeley, California, during the Fall 2024 semester. A part of this paper was written while I. Beach and R. Rotman were at the Metric Geometry trimester program at the Hausdorff Research Institute for Mathematics. I. Beach was supported in part by an NSERC Canada Graduate Scholarships Doctoral grant. H. Contreras-Peruyero was supported by UNAM Postdoctoral Program (POSDOC) and SECIHTI Postdoctoral Fellowship 2025. R. Rotman was supported by NSERC Discovery Grant and Simons Foundation International Award SFI-MPS-SFM-00006548, C. Searle was partially supported by NSF Grant DMS-2506633 and DMS-2204324.

\bibliographystyle{abbrvnat}
\bibliography{biblio}

\end{document}